\newcommand{\R}{\mathbb{R}}
\newcommand{\ed}{\text{\rm d}}
\newcommand{\eD}{\text{\rm D}}
\newcommand{\vol}{\text{\rm vol}}
\theoremstyle{thmstyleone}%
\theoremstyle{thmstyletwo}%
\newtheorem{example}{Example}%
\theoremstyle{thmstylethree}%
\newtheorem{dfn}{Definition}
\newtheorem{thm}{Theorem}
\newtheorem{prop}[dfn]{Proposition}
\newtheorem{lemma}[dfn]{Lemma}
\newtheorem{rem}{Remark}
\begin{document}

\title[The exterior derivative and the mean value equality in $\mathbb{R}^n$]{The exterior derivative and the mean value equality in $\mathbb{R}^n$}


\author[1]{\fnm{Daniel} \sur{Fadel}}\email{daniel.fadel@icmc.usp.br}

\author[2]{\fnm{Henrique} \sur{N. S\'a Earp}}\email{henrique.saearp@ime.unicamp.br}

\author*[2]{\fnm{Tom\'as} \sur{S. R. Silva}}\email{tomas@ime.unicamp.br}

\affil[1]{\orgdiv{Instituto de Ci\^encias Matem\'aticas e de Computaç\~ao (ICMC)}, \orgname{Universidade \indent de
S\~ao Paulo (USP)}, \orgaddress{\postcode{13566-590}, \city{S\~ao Carlos}, \state{S\~ao Paulo}, \country{Brazil}}}

\affil[2]{\orgdiv{Instituto de Matemática, Estatística e Computação Científica (IMECC)}, \orgname{Universidade Estadual de Campinas (UNICAMP)}, \orgaddress{ \postcode{13083-859}, \city{Campinas}, \state{S\~ao Paulo}, \country{Brazil}}}


\abstract{This survey revisits classical results in vector calculus and analysis by exploring a generalised perspective on the exterior derivative, interpreting it as a measure of ``infinitesimal flux''. This viewpoint leads to a higher-dimensional analogue of the Mean Value Theorem, valid for differential $k$-forms, and provides a natural formulation of Stokes’ theorem that mirrors the exact hypotheses of the Fundamental Theorem of Calculus -- without requiring full $C^1$ smoothness of the differential form.

As a numerical application, we propose an algorithm for exterior differentiation in $\mathbb{R}^n$ that relies solely on black-box access to the differential form, offering a practical tool for computation without the need for mesh discretization or explicit symbolic expressions.}

\keywords{Exterior derivative, Stokes' Theorem, Numerical Exterior Derivative}


\pacs[MSC Classification]{26B05, 58A10, 65D25}

\maketitle
\thanks{\itshape This article is dedicated to Professor Felipe Acker, on the occasion of his 70th birthday,\\ and to Professor Paolo Piccione, on the occasion of his 60th birthday.}

\newpage
\section{Introduction}
\label{sec:Introduction}
In this survey article, we revisit a perspective on the exterior derivative operator on differential forms that yields a higher-dimensional generalisation of the Mean Value Theorem (MVT) as an \emph{equality}, in contrast with the traditional presentation in vector calculus texts, where the theorem typically appears only in the form of an inequality. These ideas were first formulated by Felipe Acker
in his somewhat unknown articles  \cite{paper,paper2}. We propose here a thorough revision of that theory, clarifying some imprecisions in the original proofs and using it to deduce Stokes' theorem via more accessible and elementary arguments. While stronger formulations do exist -- most notably within the framework of modern \textit{Geometric Measure Theory} in terms of normal currents, cf. \cite[\S4]{Federer1996},
our aim is to present these results in an easily approachable language.


We begin with an alternative proof for the usual one-dimensional MVT (Theorem \ref{thm: one_dim}) and review a classical proof of the Fundamental Theorem of Calculus (FTC) that makes use of it (Theorem \ref{thm: ftc}). The cornerstone of this alternative approach is the following well-known property of continuous real-valued functions on closed intervals (proven in Section \ref{sec:trisection lemma}):
\begin{lemma}[Trisection Lemma]   \label{lemma: fundamental}
Let $f:[a,b]\longrightarrow\mathbb{R}$ be a continuous function (with $a<b$). Then, there exists a (proper) sub-interval $[a',b']\subset \;]a,b[$, satisfying the following:
\begin{enumerate}
\item 
$b'-a'=\frac{1}{3}(b-a)$;
\item 
$\displaystyle 
\frac{f(b')-f(a')}{b'-a'}
=\frac{f(b)-f(a)}{b-a}. $
\end{enumerate}
\end{lemma}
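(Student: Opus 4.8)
The plan is to reduce the statement to a single application of the Intermediate Value Theorem (IVT) applied to an auxiliary ``sliding-window'' increment function. Write $L=b-a$ and set $V:=\tfrac{1}{3}\bigl(f(b)-f(a)\bigr)$. Once condition (1) is imposed, condition (2) is equivalent to asking for a length-$\tfrac{L}{3}$ window $[a',b']$ on which $f$ increases by exactly $V$, since then $\frac{f(b')-f(a')}{b'-a'}=\frac{V}{L/3}=\frac{f(b)-f(a)}{b-a}$. Accordingly, I would define
\[
g(t):=f\!\left(t+\tfrac{L}{3}\right)-f(t),\qquad t\in\left[a,\,a+\tfrac{2L}{3}\right],
\]
which is continuous because $f$ is, and reduce the problem to locating an interior point $t^{\ast}$ with $g(t^{\ast})=V$.

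The key observation is a telescoping identity at the three ``tiling'' positions $t_1=a$, $t_2=a+\tfrac{L}{3}$, $t_3=a+\tfrac{2L}{3}$: the windows $[t_1,t_2]$, $[t_2,t_3]$, $[t_3,b]$ partition $[a,b]$, so
\[
g(t_1)+g(t_2)+g(t_3)=f(b)-f(a)=3V,
\]
that is, $V$ is exactly the \emph{average} of the three values $g(t_1),g(t_2),g(t_3)$. Since the average of finitely many reals lies between their minimum and maximum, we have $\min_i g(t_i)\le V\le \max_i g(t_i)$, and applying the IVT to the continuous function $g$ on the subinterval joining the minimizing and maximizing positions produces a point $t^{\ast}$ with $g(t^{\ast})=V$. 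Setting $a':=t^{\ast}$ and $b':=t^{\ast}+\tfrac{L}{3}$ then gives (1) by construction and (2) by the equivalence above.

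The hard part will be guaranteeing that $[a',b']$ is a \emph{proper} subinterval of the open interval $\,]a,b[\,$, i.e. that $t^{\ast}$ can be taken strictly inside $\,]a,\,a+\tfrac{2L}{3}[\,$; this is the only place where the word ``proper'' in the statement really bites. I would handle it by separating two cases. If $g(t_1)=g(t_2)=g(t_3)$, then each already equals $V$ and I simply take $t^{\ast}=t_2=a+\tfrac{L}{3}$, which is interior. Otherwise the three values are not all equal, so the average satisfies the \emph{strict} inequalities $\min_i g(t_i)<V<\max_i g(t_i)$; the IVT then yields $t^{\ast}$ strictly between the minimizing and maximizing positions, hence strictly inside $\,]a,\,a+\tfrac{2L}{3}[\,$, which forces $a<a'$ and $b'=t^{\ast}+\tfrac{L}{3}<a+L=b$, as required.
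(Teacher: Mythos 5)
Your proof is correct and follows essentially the same route as the paper's: trisect $[a,b]$, observe that the target slope (or increment) is the average of the three sub-window values, and apply the Intermediate Value Theorem to the sliding-window function, with the same two-case split handling properness. The only difference is cosmetic — you work with raw increments $g(t)=f(t+\tfrac{L}{3})-f(t)$ and target $V$, while the paper works with the normalised difference quotient $m(x)=\tfrac{f(x+h)-f(x)}{h}-\alpha$ and target $0$.
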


On the one hand, the usual textbook statement of the FTC does carry over
to $\R^n$ (and  manifolds, etc.) as Stokes' theorem, albeit at the cost of
tightening the hypotheses from the mere integrability of the derivative to actual $C^1$
regularity, so that the proof can be reformulated in terms of successive integration. On the other hand, the same is not true for the MVT, whose proof in higher dimensions is a repetition along paths of the one-dimensional version and yields at best an inequality. Our point is precisely that the Trisection Lemma \emph{does} generalise to higher dimension (see Lemma \ref{lemma: trisec for k-blocks}), and along with it a suitable statement of the MVT as an equality.

The conceptual key is a geometric interpretation of the exterior derivative as an `infinitesimal flux' (Definition \ref{dfn: flux}), much in the vein of the `physical'
interpretation of the divergence of a vector field (see, e.g., \cite[p.~189]{Arnold1989}). This alternative definition allows e.g. for the rigorous `differentiation' of certain discontinuous differential forms, and it coincides with the usual notion of exterior derivative
in the differentiable case (Theorem \ref{thm: compatibility}). As an illustration, consider the following lemma for $0$-forms in dimension $1$ (the proof is elementary and can be found at the end of \S\ref{subsec: compatibility}):

\begin{lemma}
\label{lemma: 0-forms dim 1}
Let $U\subset\R$  be an open subset and $f:U\to\R$ be a real function. Interpreting any closed interval $[a,b]\subseteq U$ as an \emph{oriented $1$-chain with
boundary} (see Section \ref{subsec: chains-blocks-integration} for definitions), we denote 
$$
\vol[a,b]:=b-a
\quad\text{and}\quad
\int_{\partial [a,b]}f:=f(b)-f(a). 
$$
Then
$$
f\text{ is differentiable at }x\in U,\text{ with }f'(x)=L\iff \exists\lim_{[a,b]\to x}\frac{1}{\vol[a,b]}\int_{\partial [a,b]}f =L,
$$ 
\textit{where the last condition means: $\forall\varepsilon>0$, $\exists\delta>0$ such that} 
$$
[a,b]\subseteq U, x\in[a,b], \vol[a,b]<\delta \Rightarrow \left|\frac{f(b)-f(a)}{b-a} - L\right|<\varepsilon.
$$

\end{lemma}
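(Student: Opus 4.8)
The plan is to prove the two implications separately, keeping in mind throughout that the quantity inside the limit, namely $\frac{1}{\vol[a,b]}\int_{\partial[a,b]}f=\frac{f(b)-f(a)}{b-a}$, is a \emph{two-sided} difference quotient over an interval that is \emph{constrained} to contain $x$. This constraint is the whole point: since $x\in[a,b]$ forces both $x-a$ and $b-x$ to be at most $\vol[a,b]=b-a$, the two endpoints are dragged to $x$ as the length shrinks. Without that constraint the condition would instead amount to the much stronger \emph{strict} derivative, which a merely differentiable function need not possess (e.g.\ $x^{2}\sin(1/x)$ at the origin, where the difference quotient over intervals \emph{avoiding} $0$ fails to converge); so the hypothesis $x\in[a,b]$ is genuinely essential.

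For the direction $(\Leftarrow)$ I would simply specialise the admissible intervals. Taking $a=x$ and letting $b\downarrow x$ gives $\frac{f(b)-f(x)}{b-x}\to L$, i.e.\ the right derivative at $x$ equals $L$; taking $b=x$ and letting $a\uparrow x$ gives the left derivative equal to $L$. Both one-sided derivatives existing and agreeing yields differentiability of $f$ at $x$ with $f'(x)=L$.

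For the direction $(\Rightarrow)$, assume $f'(x)=L$ and fix $\varepsilon>0$; by openness of $U$ and the definition of the derivative, choose $\delta>0$ with $[x-\delta,x+\delta]\subseteq U$ and $|f(x+h)-f(x)-Lh|\le\varepsilon|h|$ for $|h|<\delta$. Given any admissible $[a,b]$ with $x\in[a,b]$ and $b-a<\delta$, I would write $a=x-s$, $b=x+t$ with $s,t\ge 0$ and $s+t=b-a$, and split
$$
f(b)-f(a)=\bigl(f(x+t)-f(x)\bigr)+\bigl(f(x)-f(x-s)\bigr)=L(s+t)+R,
$$
where each remainder is controlled by the displayed estimate (with $h=t$ and $h=-s$ respectively), so that $|R|\le\varepsilon t+\varepsilon s=\varepsilon(b-a)$. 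Dividing by $\vol[a,b]=b-a>0$ then yields $\bigl|\tfrac{f(b)-f(a)}{b-a}-L\bigr|\le\varepsilon$, which is exactly the claimed limit. Equivalently, when $a<x<b$ one may observe that the quotient is the convex combination $\frac{b-x}{b-a}\cdot\frac{f(b)-f(x)}{b-x}+\frac{x-a}{b-a}\cdot\frac{f(x)-f(a)}{x-a}$ of the two one-sided quotients, both of which lie within $\varepsilon$ of $L$.

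The only real care lies in the $(\Rightarrow)$ direction, and it is bookkeeping rather than a true obstacle: one must allow the degenerate cases $s=0$ or $t=0$ (that is, $x$ an endpoint), in which a single one-sided contribution survives and the corresponding remainder vanishes, and one must confirm that every interval of length $<\delta$ containing $x$ indeed lies in $U$. The conceptual crux, rather than any computation, is the recognition that the constraint $x\in[a,b]$ collapses the two-sided interval quotient into a convex average of one-sided quotients anchored at $x$, which is precisely what distinguishes this limit from the (strictly stronger) strict derivative.
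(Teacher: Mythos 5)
Your proof is correct and follows essentially the same route as the paper: the ($\Rightarrow$) direction rests on the first-order expansion of $f$ at $x$ together with the key observation that $x\in[a,b]$ forces $|b-x|,\,|x-a|\le b-a$, so the remainder terms survive division by $\vol[a,b]$; your splitting of $f(b)-f(a)$ through the point $x$ is algebraically identical to the paper's decomposition $f=f_0+f_1$ into the linear part plus the remainder $f_1(y)=|y-x|\rho(y)$. The only difference is cosmetic: you also spell out the ($\Leftarrow$) direction via one-sided difference quotients, which the paper dismisses as clear.
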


\noindent Then the geometrical interpretation of the derivative of $f$ at the point $x$ is a measure of the flux of $f$ through the boundary of small intervals containing $x$.

In the general setting of $(k-1)$-forms in dimension $n$, we mimic Lemma \ref{lemma: 0-forms dim 1} and define the exterior derivative $\eD\omega_x$ of $\omega$ at the point $x$ by requiring that, for every choice of directions $v_1,\dots,v_k \in \R^n$, the value of $(\eD\omega)_x(v_1,\dots,v_k)$ be given by the limit of the normalised flux of $\omega$ across the boundary of $C^1$-parametrised $k$-blocks shrinking to $x$ (see Definition \ref{dfn: flux}). In dimensions $n>1$ and degree $k>1$, the right-hand side of this definition still makes sense even when $\omega$ is not continuous at $x$, let alone differentiable (see e.g. Example \ref{ex:constant-null}). Moreover, we prove that in the differentiable case this flux-based exterior derivative $\eD\omega$ agrees with the usual exterior derivative $\ed\omega$ (see Theorem \ref{thm: compatibility}). Thus, the definition extends the classical operator $\ed$ to a strictly larger class of differential forms. Moreover, with this viewpoint, we can prove the following generalisation of MVT as an equality (see Theorem \ref{thm: mvt}):

\begin{thm}[MVT for differential forms]
Let $\omega$ be a $(k-1)$-form defined on a $k$-block $B=[a_1,b_1]\times\ldots\times[a_k,b_k]\subset\mathbb{R}^k$ such that:
\begin{enumerate}
\item $\omega$ is continuous\footnote{In Theorem \ref{thm: mvt} we ask for a slightly weaker continuity-type assumption on $\omega$; see Definition \ref{dfn: cont}.};
\item $\omega$ is \emph{derivable} on $\mathring{B}$, i.e. $\omega$ admits an exterior derivative $\eD\omega$ in the broader sense of Definition~\ref{dfn: flux}, which agrees with the usual $\ed\omega$ when $\omega$ is differentiable (Theorem \ref{thm: compatibility});
\end{enumerate} 
then there exists $\xi\in\mathring{B}$ such that 
$$
{\eD\omega}_\xi(e_1,\ldots,e_k) = \frac{1}{\mathrm{vol}(B)}\int_{\partial{B}}\omega.
$$
\end{thm}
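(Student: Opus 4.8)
The plan is to manufacture, by iterated trisection, a nested sequence of $k$-blocks that all carry the \emph{same} normalised flux as $B$ and that shrink to a single point; the flux-based definition of $\eD\omega$ then forces its value at that point to equal the common normalised flux. Throughout, write $\mu := \frac{1}{\vol(B)}\int_{\partial B}\omega$ for the target quantity.

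First I would apply the higher-dimensional Trisection Lemma (Lemma \ref{lemma: trisec for k-blocks}) to $B_0 := B$. Its role is precisely that of its one-dimensional counterpart: just as in Lemma \ref{lemma: fundamental} one slides a window of length $\tfrac13(b-a)$ along $[a,b]$ and, fed by the telescoping identity at the three sample positions, an intermediate-value argument locates an interior sub-interval of equal average slope, here the lemma produces a \emph{proper} sub-block $B_1\subset\mathring{B_0}$ whose edges are scaled by $\tfrac13$ and which satisfies $\frac{1}{\vol(B_1)}\int_{\partial B_1}\omega = \frac{1}{\vol(B_0)}\int_{\partial B_0}\omega = \mu$. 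The continuity-type hypothesis~(1) (Definition \ref{dfn: cont}) is exactly what guarantees that the relevant face integrals depend continuously on the block's position, enabling the intermediate-value step. Iterating, I obtain a nested sequence $B_0\supset B_1\supset B_2\supset\cdots$ with $B_{j+1}\subset\mathring{B_j}$, with $\operatorname{diam}(B_j)\le 3^{-j}\operatorname{diam}(B)\to 0$, and with $\frac{1}{\vol(B_j)}\int_{\partial B_j}\omega=\mu$ for every $j$.

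Next, by the nested-compact-sets (Cantor) property, $\bigcap_{j\ge 0}B_j=\{\xi\}$ for a unique point $\xi$; since $B_1\subset\mathring{B}$ we have $\xi\in\mathring{B}$, which is where $\omega$ is assumed derivable. The blocks $B_j$ are axis-aligned with edges along $e_1,\dots,e_k$ and shrink to $\xi$ without degenerating (their aspect ratios stay constant, equal to those of $B$), so they form an admissible family for the limit defining $(\eD\omega)_\xi(e_1,\dots,e_k)$ in Definition \ref{dfn: flux}. Evaluating that limit along $(B_j)$ yields
$$(\eD\omega)_\xi(e_1,\dots,e_k)=\lim_{j\to\infty}\frac{1}{\vol(B_j)}\int_{\partial B_j}\omega=\lim_{j\to\infty}\mu=\mu=\frac{1}{\vol(B)}\int_{\partial B}\omega,$$
as claimed.

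The step I expect to be the main obstacle is the compatibility between this particular trisection sequence and the limit in Definition \ref{dfn: flux}: one must verify that the $C^1$-parametrisation and non-degeneracy conditions built into that definition are genuinely met by the explicit axis-aligned blocks $B_j$ (or, should the definition demand the limit over \emph{all} admissible shrinking families, that nothing beyond exhibiting $(B_j)$ as one such family is needed). A secondary delicate point is running the Trisection Lemma under only the weaker continuity hypothesis of Definition \ref{dfn: cont}, rather than full continuity, so that all the face integrals entering the telescoping and intermediate-value steps remain well defined at every stage of the iteration.
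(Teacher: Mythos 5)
Your proposal is correct and follows essentially the same route as the paper's own proof: iterate the Trisection Lemma (in its flux-continuity form, Lemma \ref{thm: fundlemma}) to get nested blocks of constant normalised flux, take the Cantor intersection point $\xi\in\mathring{B}$, and then invoke derivability at $\xi$ with $\varphi=1_{\mathbb{R}^k}$, noting that trisection preserves aspect ratios so the blocks satisfy the $L(B_j)/l(B_j)<K$ condition of Definition \ref{dfn: flux}. Both of the ``delicate points'' you flag are resolved exactly as you anticipate: the paper fixes $K$ above the aspect ratio of $B$ (inherited by every $B_n$), and it runs the trisection under flux-continuity alone.
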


As a result, the original spirit of the one-dimensional proof of the FTC using the MVT can be emulated to show an elementary version of Stokes’ theorem (see Theorem \ref{thm: diff_stokes}), with the typical $C^1$ hypothesis on the integrand relaxed:

\begin{thm}[Stokes without $C^1$ assumption]
\label{thm: Stokes without C1}
Let $\omega\in\Omega^k(U)$ be a $k$-form on some open subset $U\subset\mathbb{R}^n$, and let $c:B\rightarrow U$ be a $(k+1)$-singular block of class $C^1$ in $U$ (see Section \ref{subsec: chains-blocks-integration}). If $\omega$ is continuous on $c(B)$ and differentiable on $c(\mathring{B})$, then
$$ 
\int_{\partial c} \omega = \int_{c} \ed\omega, 
$$ whenever the right-hand side integral exists. More generally, let $c=n_1c_1+\ldots+n_lc_l$ be a $(k+1)$-chain of class $C^1$, with $c_i:B_i\rightarrow U$, $i=1,\dots,l$. If $\omega$ is continuous on $c(B_i)$ and differentiable on $c(\mathring{B_i})$, for each $i=1,\ldots,l$, then the same conclusion holds.
\end{thm}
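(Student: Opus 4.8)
The plan is to emulate the one-dimensional deduction of the Fundamental Theorem of Calculus from the Mean Value Theorem (Theorem \ref{thm: ftc}), now using the $k$-dimensional MVT for forms (Theorem \ref{thm: mvt}) in place of its one-dimensional ancestor. The chain case is immediate from the single-block case by linearity: both $\int_{\partial c}\omega$ and $\int_c\ed\omega$ are linear in the chain and $\partial(\sum_i n_ic_i)=\sum_i n_i\,\partial c_i$, so summing the block identities with coefficients $n_i$ yields the general statement. Hence I would focus on a single $(k+1)$-singular block $c:B\to U$.

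Next I would pull everything back to the flat block $B\subset\mathbb{R}^{k+1}$. Writing $\eta:=c^*\omega$, the definitions of integration over singular blocks give $\int_{\partial c}\omega=\int_{\partial B}\eta$ and $\int_c\ed\omega=\int_B c^*(\ed\omega)$, so the claim becomes $\int_{\partial B}\eta=\int_B c^*(\ed\omega)$. Here I would establish the key naturality fact: $\eta$ is continuous on $B$ (because $c$ is $C^1$ and $\omega$ is continuous on $c(B)$) and $\eta$ is \emph{derivable} on $\mathring B$ in the sense of Definition \ref{dfn: flux}, with ${\eD\eta}=c^*(\ed\omega)$. This is exactly where the flux viewpoint pays off: the flux of $\eta$ across the boundary of a $C^1$-parametrised block shrinking to $x$ equals the flux of $\omega$ across the image block, which is again a $C^1$-parametrised block shrinking to $c(x)$; taking the normalised limit and absorbing the Jacobian volume factor reproduces precisely the change-of-variables transformation law of $c^*(\ed\omega)$, once $\eD\omega$ is identified with the usual $\ed\omega$ on the differentiable locus $c(\mathring B)$ via Theorem \ref{thm: compatibility}. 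Crucially, $\eta$ need not be differentiable in the classical sense -- the $C^1$ Jacobian factors are only continuous -- so it is essential that Theorem \ref{thm: mvt} is available for merely derivable forms and not only differentiable ones.

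With $\eta$ a continuous, derivable $k$-form on the flat $(k+1)$-block $B$, the core argument is the direct analogue of the FTC proof. I would partition $B$ into sub-blocks $B_1,\dots,B_N$ of mesh tending to $0$ and apply Theorem \ref{thm: mvt} on each (note $\mathring{B_i}\subset\mathring B$, so $\eta$ is derivable there) to obtain points $\xi_i\in\mathring{B_i}$ with ${\eD\eta}_{\xi_i}(e_1,\dots,e_{k+1})\,\vol(B_i)=\int_{\partial B_i}\eta$. Summing over $i$, the contributions of every interior face cancel in pairs because adjacent sub-blocks induce opposite orientations on their shared face, leaving $\sum_i\int_{\partial B_i}\eta=\int_{\partial B}\eta$. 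The left-hand side $\sum_i {\eD\eta}_{\xi_i}(e_1,\dots,e_{k+1})\,\vol(B_i)$ is a tagged Riemann sum of the top coefficient of ${\eD\eta}=c^*(\ed\omega)$; since by hypothesis $\int_c\ed\omega=\int_B c^*(\ed\omega)$ exists, these Riemann sums converge to that integral as the mesh shrinks, and the identity $\int_{\partial B}\eta=\int_B c^*(\ed\omega)$ follows.

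I expect the genuine obstacle to be the naturality step ${\eD(c^*\omega)}=c^*(\ed\omega)$ under the minimal regularity at hand: with $\omega$ merely differentiable and $c$ merely $C^1$, the pullback $\eta$ is typically \emph{not} classically differentiable, so one cannot invoke the usual $\ed\circ c^{*}=c^{*}\circ\ed$ and must argue directly from Definition \ref{dfn: flux}. A secondary point requiring care is verifying that $\eta$ satisfies the continuity-type hypothesis of Definition \ref{dfn: cont} needed to invoke Theorem \ref{thm: mvt}, and that the assumed existence of $\int_c\ed\omega$ is exactly what licenses the passage from the Riemann sums to the integral.
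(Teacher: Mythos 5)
Your proposal is correct and follows essentially the same route as the paper: the paper obtains this theorem as a corollary of its Stokes theorem for $\eD$ (Theorem \ref{thm: gstokes}) combined with the compatibility result $\eD=\ed$ on the differentiable locus (Theorem \ref{thm: compatibility}), and Theorem \ref{thm: gstokes} is itself proved by exactly your steps --- naturality of $\eD$ under $C^1$ pullbacks (Lemma \ref{thm: pullback_lemma}) followed by the flat-block divergence theorem (Proposition \ref{prop: stokes}), i.e. the MVT-plus-Riemann-sums argument with interior faces cancelling, and the chain case by linearity. The only cosmetic discrepancy is your description of the naturality step as ``absorbing the Jacobian volume factor'': since Definition \ref{dfn: flux} quantifies over all $C^1$ parametrisations and normalises by the volume of the \emph{flat parameter block}, naturality follows at once by composing a parametrisation $\varphi$ with $c$, with no Jacobian bookkeeping required.
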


This version of Stokes' theorem has precisely the analogous hypotheses of the one-dimensional FTC (see Theorem \ref{thm: ftc}): continuity on the domain block $[a,b]$, differentiability on the image of its interior (as we have the inclusion $1$-singular block $[a,b]\hookrightarrow\mathbb{R}$, the hypothesis translates to differentiability on the interior $]a,b[$) and Riemann integrability of the right-hand side.

The text is intended to be accessible to an advanced undergraduate student, but we believe that even experienced mathematicians may find our construction intriguing and even useful for teaching. Furthermore, Section~\ref{sec:numerical-exterior-derivative} presents a contemporary numerical application of the theory: a fully-functioning sample-based implementation of the exterior derivative on~$\mathbb{R}^n$, based on its flux interpretation (Definition~\ref{dfn: flux}), which to our knowledge is completely original in this perspective and could have direct uses in computational calculus on surfaces. 

\medskip

\noindent\textbf{Organisation of the paper.}
Section~\ref{sec:trisection lemma} develops the subdivision framework on singular chains that makes it possible to extend to higher dimensions the one-dimensional subdivision argument underlying the Fundamental Theorem of Calculus. The Trisection Lemma and its higher-dimensional extensions provide a systematic decomposition of chains into controlled pieces, yielding in particular the notion of flux-continuity. These results furnish the geometric control on boundary integrals that is essential for the limiting arguments used throughout the paper.

Section~\ref{sec: Exterior derivative} introduces the exterior derivative as an infinitesimal flux. Following the paradigm outlined in the introduction, we define the operator $\eD$ via normalised boundary integrals over shrinking blocks and establish its fundamental properties, including a mean value equality for differential forms, a Stokes formula for $\eD$, the identity $\eD^2=0$, and compatibility with the classical exterior derivative in the differentiable setting.

Section~\ref{sec:Stokes_Theorem} combines these results to obtain a Stokes theorem for the classical exterior derivative under hypotheses directly analogous to those of the Fundamental Theorem of Calculus.

Finally, Section~\ref{sec:numerical-exterior-derivative} presents a numerical application of the theory. The flux interpretation leads to a natural and easily implementable approximation scheme for the exterior derivative based on boundary integrals, illustrating that this viewpoint is not only conceptually clarifying but also computationally effective.




\section{Repercussions of the Trisection Lemma}
\label{sec:trisection lemma}                
We will discuss a rather non-standard proof (cf. \cite[p.~160]{acker1} and \cite[p.~210]{paper}) of the one-dimensional Mean Value Theorem. The essence of all generalisations we carry out in this survey lies in the Trisection
Lemma stated in the Introduction (Lemma \ref{lemma: fundamental}), so let us begin with its proof.

\begin{proof}[Proof of Lemma \ref{lemma: fundamental}]
Take the partition $P=\{a\eqqcolon a_0<a_1<a_2<a_3\coloneqq b\}$ of $[a,b]$ with ${a_i-a_{i-1} = \frac{1}{3}(b-a)\eqqcolon h}$, for each $i=1,\dots,3$, so that
$$ 
\alpha\coloneqq\frac{f(b)-f(a)}{b-a}
=\frac{1}{3}\sum\limits_{i=1}^3 \frac{f(a_i)-f(a_{i-1})}{a_i-a_{i-1}}.
$$ 
If the three terms of the sum on the right-hand side  are equal, we just take $b'\coloneqq a_2$ and $a'\coloneqq a_1$. Otherwise the continuous function $m:[a,b-h]\longrightarrow\mathbb{R}$, defined by $$ m(x)=\frac{f(x+h)-f(x)}{h} - \alpha, $$ assumes positive and negative values on two of the $a_i$'s, for $i=0,1$ or $2$. Then one can restrict $m$ to the interval $I$, say, determined
by such points and invoke the intermediate value theorem, given the connectedness of $I$ and continuity of $m$. This ensures the existence of some interior $c\in I\subset \;]a,b-h[$ such that $m(c)=0$; in this case, take $b'\coloneqq c+h$ and $a'=c$.                          
\end{proof}

\subsection{The mean value theorem and the fundamental theorem
of calculus}

Let us recall the statement of the one-dimensional version of the MVT (outlined in \cite[p.~160]{acker1} and \cite[p.~210]{paper})
and its proof using the Trisection Lemma:

\begin{thm}[M.V.T., $1$-dimensional case]
\label{thm: one_dim}
If $f:[a,b]\longrightarrow\mathbb{R}$ is a continuous function in $[a,b]$ and differentiable in $]a,b[$, then 
$$ \exists c\in\;]a,b[ \quad\text{such that}\quad f'(c)=\frac{f(b)-f(a)}{b-a}. $$
\end{thm}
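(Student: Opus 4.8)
The plan is to iterate the Trisection Lemma so as to trap the slope $\alpha:=\frac{f(b)-f(a)}{b-a}$ inside a shrinking nest of intervals, and then to recover it as the derivative at the limit point. First I would set $[a_0,b_0]:=[a,b]$ and apply Lemma \ref{lemma: fundamental} repeatedly: given $[a_n,b_n]$, the lemma produces a \emph{proper} subinterval $[a_{n+1},b_{n+1}]\subset\;]a_n,b_n[$ with $b_{n+1}-a_{n+1}=\frac{1}{3}(b_n-a_n)$ and $\frac{f(b_{n+1})-f(a_{n+1})}{b_{n+1}-a_{n+1}}=\frac{f(b_n)-f(a_n)}{b_n-a_n}$. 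By induction this yields a nested sequence of closed intervals with $b_n-a_n=3^{-n}(b-a)\to 0$ and, crucially, the \emph{constant} slope $\frac{f(b_n)-f(a_n)}{b_n-a_n}=\alpha$ for every $n$. Because each step lands strictly inside the previous \emph{open} interval, the sequence $(a_n)$ is strictly increasing and $(b_n)$ is strictly decreasing.

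By the nested interval theorem the intersection $\bigcap_n[a_n,b_n]$ reduces to a single point $c$, and since $[a_1,b_1]\subset\;]a,b[$ we obtain $c\in\;]a,b[$. Moreover, strict monotonicity forces $a_n<c<b_n$ for every $n$, so $c$ lies in the open interior of each block of the nest; this is precisely where the ``proper subinterval'' clause of the Trisection Lemma earns its keep, as it guarantees a genuinely two-sided approach to $c$.

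It remains to show $f'(c)=\alpha$, and this is the only delicate step. Here I would use that $f$ is differentiable at $c$ together with the fact that each $\alpha$ is a \emph{straddling} difference quotient. Writing
$$\alpha=\frac{f(b_n)-f(a_n)}{b_n-a_n}=\frac{b_n-c}{b_n-a_n}\cdot\frac{f(b_n)-f(c)}{b_n-c}+\frac{c-a_n}{b_n-a_n}\cdot\frac{f(c)-f(a_n)}{c-a_n}$$
exhibits $\alpha$ as a convex combination (the two weights are positive and sum to $1$) of a right-hand and a left-hand difference quotient at $c$. As $n\to\infty$ we have $b_n\downarrow c$ and $a_n\uparrow c$, so by differentiability at $c$ both quotients converge to $f'(c)$; and a convex combination of two quantities each within $\varepsilon$ of $f'(c)$ is itself within $\varepsilon$ of $f'(c)$, whence $|\alpha-f'(c)|<\varepsilon$ for every $\varepsilon>0$. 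Therefore $\alpha=f'(c)$, as claimed. I expect this last convergence to be the main obstacle: one must resist splitting the difference quotient naively and instead exploit the convex-combination bound, which is exactly what lets the argument close using nothing more than differentiability at the single point $c$.
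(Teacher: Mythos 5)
Your proof is correct, and its skeleton is exactly the paper's: iterate the Trisection Lemma \ref{lemma: fundamental} to produce a nested sequence $[a_n,b_n]$ with constant slope $\alpha=\frac{f(b)-f(a)}{b-a}$ and lengths $3^{-n}(b-a)$, extract the unique point $c$ of the intersection, and note $c\in\;]a,b[$ because $[a_1,b_1]\subset\;]a,b[$. The two arguments part ways only at the final step. The paper invokes Lemma \ref{lemma: 0-forms dim 1} (the flux characterisation of the derivative, proven in \S\ref{subsec: compatibility} via the first-order expansion $f(y)=f(x)+f'(x)(y-x)+|y-x|\rho(y)$), which states directly that difference quotients over small intervals \emph{containing} $c$ converge to $f'(c)$; that formulation never divides by $b_n-c$ or $c-a_n$, so it is indifferent to whether $c$ sits at an endpoint of some $[a_n,b_n]$. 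You instead prove the needed convergence inline, exhibiting $\alpha$ as a convex combination of the two one-sided difference quotients at $c$; this is correct, elementary, and self-contained, but it genuinely requires the strict straddling $a_n<c<b_n$, which you rightly secure from the properness clause $[a_{n+1},b_{n+1}]\subset\;]a_n,b_n[$ of the Trisection Lemma --- a point the paper's route never needs. What the paper's choice buys is thematic coherence: Lemma \ref{lemma: 0-forms dim 1} is the one-dimensional, $0$-form prototype of the flux definition of the exterior derivative (Definition \ref{dfn: flux}), so routing the MVT through it foreshadows the higher-dimensional argument of Theorem \ref{thm: mvt}, where the convex-combination trick has no obvious analogue. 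What your choice buys is independence: the proof closes using nothing beyond differentiability at the single point $c$, with no forward reference to machinery developed later in the paper.
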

\begin{rem}
    Lemma \ref{lemma: 0-forms dim 1}, whose proof can be found at the end of \S\ref{subsec: compatibility}, shows that in the context of $0$-forms in dimension $1$, differentiability is equivalent to being \emph{derivable} in the sense of Definition~\ref{dfn: flux}.
\end{rem}
\begin{proof}
Noting that continuity is preserved by restriction, and that the differentiability of $f$ on $]a,b[$ implies the differentiability of $f|_{[c,d]}$ on $]c,d[$, for every $[c,d]\subset[a,b]$, we can iterate the last Lemma to obtain a descending chain of compact sets $[a_i,b_i]$, 
$$ ]a,b[\;\supset[a_1,b_1]\supset\ldots\supset[a_n,b_n]\supset\ldots, $$
with the following properties,  for every $n\in\mathbb{N}$:
\begin{enumerate}
\item[(1)] 
$\displaystyle
b_n-a_n=\frac{1}{3^n}(b-a)$;  
\item[(2)] 
$\displaystyle 
\frac{f(b_n)-f(a_n)}{b_n-a_n}
=\frac{f(b)-f(a)}{b-a}.$
\end{enumerate}

Now, as a decreasing nested sequence of non-empty compact sets in the Hausdorff compact space $[a,b]$, one has $X\coloneqq\bigcap [a_n,b_n]\neq\emptyset$. Moreover,
property $(1)$ ensures that $X$ consists of a single point, say $c$. We claim that 
$$ 
f'(c) 
= 
\lim_{n\to\infty} \frac{f(b_n)-f(a_n)}{b_n-a_n} 
= 
\frac{f(b)-f(a)}{b-a}.
$$ In fact, the second equality follows from (2), so we are left to prove the first one. We make use of the characterisation given by Lemma \ref{lemma: 0-forms dim 1}. Let $\varepsilon>0$ be given. The differentiability of $f$ in $]a,b[$ and the fact that $\{c\}=X\subset[a_1,b_1]\subset\;]a,b[$
together imply the differentiability of $f$ at $c$, thus Lemma \ref{lemma: 0-forms dim 1} guarantees that there exists $\delta>0$ such that 
$$
[x,y]\subseteq]a,b[, c\in[x,y], y-x<\delta \Rightarrow \left|\frac{f(y)-f(x)}{y-x} - f'(c)\right|<\varepsilon.
$$ Now, by (1) there exists $n_0\in\mathbb{N}$ such that $b_n-a_n<\delta$ for every $n\geq n_0$. Since $c\in [a_n,b_n]\subseteq\;]a,b[$ for each $n\in\mathbb{N}$, we have
$$
n\geq n_0 \Rightarrow \left|\frac{f(b_n)-f(a_n)}{b_n-a_n} - f'(c)\right|<\varepsilon.
$$                                        This completes the proof.
\end{proof}

We next revisit the classical proof of the Second Fundamental Theorem of Calculus, highlighting the perspective that will inform our generalisation of the MVT

\begin{thm}[Second Fundamental Theorem of Calculus]
\label{thm: ftc}
If $f:[a,b]\longrightarrow\mathbb{R}$ is continuous on $[a,b]$, differentiable on $]a,b[$ and $f'$ is integrable on $[a,b]$, then 
$$ \int_{\partial[a,b]} f = \int_{[a,b]} f'. $$
\end{thm}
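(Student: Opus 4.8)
The plan is to reduce the statement to the classical identity $f(b)-f(a)=\int_{[a,b]} f'$ and to derive that identity by combining the one-dimensional MVT (Theorem \ref{thm: one_dim}) with the Riemann integrability hypothesis on $f'$. First I would unwind the left-hand side: by the boundary convention recorded in Lemma \ref{lemma: 0-forms dim 1}, one has $\int_{\partial[a,b]} f = f(b)-f(a)$, so the claim becomes $f(b)-f(a)=\int_{[a,b]} f'$.

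Next I would introduce an arbitrary partition $P=\{a=x_0<x_1<\cdots<x_N=b\}$ of $[a,b]$. Since $f$ is continuous on each closed subinterval $[x_{i-1},x_i]$ and differentiable on its interior, Theorem \ref{thm: one_dim} yields a point $c_i\in\;]x_{i-1},x_i[$ with $f(x_i)-f(x_{i-1})=f'(c_i)(x_i-x_{i-1})$. Summing over $i$ and telescoping the left-hand side gives
$$f(b)-f(a)=\sum_{i=1}^N f'(c_i)(x_i-x_{i-1}),$$
so that, for \emph{every} partition $P$, the fixed quantity $f(b)-f(a)$ is realised as a tagged Riemann sum of $f'$, with the tags $c_i$ supplied by the MVT.

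The final step invokes integrability. Because $f'$ is Riemann integrable on $[a,b]$, for each $\varepsilon>0$ there is $\delta>0$ such that every tagged partition of mesh smaller than $\delta$ produces a Riemann sum within $\varepsilon$ of $\int_{[a,b]} f'$, and crucially this holds irrespective of how the tags are chosen. Applying this to a partition $P$ of mesh $<\delta$ together with the MVT-tags $c_i$ above, the corresponding Riemann sum equals $f(b)-f(a)$, whence $\left|f(b)-f(a)-\int_{[a,b]}f'\right|<\varepsilon$. As $\varepsilon>0$ was arbitrary, the two quantities coincide, which is the desired equality.

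The main obstacle I anticipate is precisely this last point, and it is exactly where the integrability hypothesis does its work: the tags $c_i$ are not chosen freely but are dictated by the partition through the MVT, so one cannot simply take the limit of a single prescribed sequence of Riemann sums. What rescues the argument is that Riemann integrability guarantees convergence of the Riemann sums \emph{uniformly} over all admissible tag choices as the mesh tends to zero; with mere pointwise existence of $f'$ but no integrability, the telescoped identity alone would not force the limit. This is the structural feature I expect the higher-dimensional generalisation (via the Trisection Lemma and the flux-based exterior derivative) to reproduce.
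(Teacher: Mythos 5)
Your proof is correct and follows essentially the same route as the paper: partition $[a,b]$, apply the one-dimensional MVT (Theorem \ref{thm: one_dim}) on each subinterval to realise the telescoped quantity $f(b)-f(a)$ as a Riemann sum of $f'$, and let the integrability hypothesis force the equality. The only divergence is the closing step: the paper sandwiches $f(b)-f(a)$ between the lower and upper Darboux sums $s(f';P)$ and $S(f';P)$ for \emph{every} partition $P$ and concludes directly from $\sup_P s(f';P)=\inf_P S(f';P)=\int_{[a,b]}f'$, whereas you invoke the mesh criterion (small-mesh tagged sums lie within $\varepsilon$ of the integral uniformly over tag choices) --- an equivalent but stronger-looking characterisation whose proof is precisely the ``obstacle'' you flag, and which the Darboux sandwich sidesteps entirely.
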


\begin{proof}
Let $P=\{a=a_0<\ldots<a_n=b\}$ be a partition of $[a,b]$. We denote by $s(f';P)$ and $S(f';P)$, respectively, the lower and upper sums of $f'$ relative to $P$. Writing $$\int_{\partial[a,b]}f=f(b)-f(a)=\sum\limits_{i=1}^n {f(a_i)-f(a_{i-1})},$$ we apply the MVT to each restriction  $f|_{[a_{i-1},a_i]}$, $i=1,\ldots,n$. Then there exist $\xi_i\in\;]a_{i-1},a_i[$ such that $f(a_i)-f(a_{i-1})=f'(\xi_i)(x_i-x_{i-1})$, so $$\int_{\partial[a,b]}f=\sum\limits_{i=1}^n {f'(\xi_i)(x_i-x_{i-1})},$$ which implies $s(f';P)\leq{\int_{\partial[a,b]}f}\leq{S(f';P)}$.
This yields the claim, by the generality of $P$ and integrability of $f'$ on $[a,b]$.
\end{proof}

The previous proof illustrates the importance of the MVT equality form to show Theorem \ref{thm: ftc}. Moreover, it suggests that, if a similar version of the MVT were available for differential forms in higher dimensions, then one might expect to prove some version of Stokes' theorem with similar assumptions via some kind of mean value argument. This is precisely the strategy we shall follow, but that will require some rigorous vocabulary for integration in $\R^n$.

\subsection{Chains, blocks and integration in \texorpdfstring{$\R^n$}{Rn}}\label{subsec: chains-blocks-integration}

If $f:A\rightarrow W$ is a function between open subsets $A\subseteq\mathbb{R}^k$ and $W\subseteq\mathbb{R}^n$, we know $f$ is of class $C^l$ ($l\geq 1$) if all the partial derivatives of $f$ up to order $l$ exist and are continuous on $A$. More generally, if $X\subseteq\mathbb{R}^k$ and $Y\subseteq\mathbb{R}^n$ are arbitrary subsets, a function $f:X\rightarrow Y$, is said to be of {\textbf{class} {\boldmath $C^l$}} if, for every point $x\in X$, there exists some open neighbourhood $U$ of $x$ and a $C^l$ function $F:U\rightarrow\mathbb{R}^n$ such that $f|_{U\cap X}=F$.

\begin{dfn}[$k$-blocks, singular $k$-blocks and $k$-chains]
A {\textbf{{\boldmath$k$}-block}} $B=B^k$ in $\mathbb{R}^k$ ($k\geqslant 1$) is a $k$-product of non-degenerate closed intervals of the real line, i.e., $B^k:=\prod_{i=1}^k [a_i,b_i]$, where $\{a_i\},\{b_i\}\neq[a_i,b_i]\subseteq\mathbb{R}$, $i=1,\ldots, k$. Given an open subset $U\subset\mathbb{R}^n$, a \textbf{singular {\boldmath$k$}-block of class $C^l$ ($l\geq 1$) in $U$} is a map of the form $c:B^k\rightarrow U$ of class $C^l$. A \textbf{{\boldmath$k$}-chain (of class $C^l$) in $U$} is a \emph{formal} (finite) sum $c = n_1c_1+\ldots+n_pc_p$ of singular $k$-blocks $c_i$ (of class $C^l$) in $U$, with integer coefficients $n_i\in\mathbb{Z}$, $i=1,\ldots,p$. Finally, a \textbf{(singular) {\boldmath$0$}-block in $U$} is simply a point in $U$ (a function $c:\{0\}\to U$).
\end{dfn}

\begin{dfn}[Boundary of a singular $k$-block]
\label{dfn:boundary}
Let $k\geq 1$ and $B\coloneqq\prod\limits_{i=1}^k[a_{i0},a_{i1}]$ be a $k$-block in $\mathbb{R}^k$ and $c$ a singular $k$-block in $\mathbb{R}^n$ defined on $B$. For each $i\in\{1,\ldots,k\}$ and $j\in\{0,1\}$, define 
$$ c_{ij}:\prod\limits_{l\neq i}[a_{l0},a_{l1}]\longrightarrow\mathbb{R}^n $$ by 
$$ c_{ij}(t_1,\ldots,\hat{t_i},\ldots,t_k)\coloneqq c(t_1,\ldots,a_{ij},\ldots,t_k). $$ Then, the \textbf{{boundary of $c$}} is the $(k-1)$-chain $\partial c$ defined by

$$ \partial c\coloneqq \sum\limits_{i=1}^k\sum\limits_{j=0}^1(-1)^{i+j}c_{ij}. $$ If $c=n_1c_1+\ldots n_pc_p$ is a $k$-chain, $k\geq 1$, the \textbf{{boundary of $c$}} is the $(k-1)$-chain defined by $$\partial c\coloneqq n_1\partial c_1+\ldots+n_p\partial c_p.$$ 

Finally, we define the boundary of a $0$-chain to be the zero map.
\end{dfn}
\begin{rem}
In Definition \ref{dfn:boundary}, suppose that $n=k$ and that 
$B=\prod\limits_{i=1}^k [a_{i0},a_{i1}] \subset U$ for some open set 
$U \subset \mathbb{R}^k$. In this situation, we regard $B$ as the singular 
$k$-block in $U$ determined by the inclusion map $c:B \hookrightarrow U$. 
We then write $\partial B$ for the underlying set of $\partial c$, and in this 
case the maps $c_{ij}$ are precisely the faces of the $(k-1)$-block $\partial B$.
\end{rem}

An important property of the boundary operation $\partial$ is that $\partial(\partial c)=0$, for every $k$-chain $c$ on some open subset $A$ of $\mathbb{R}^n$.\label{page boundary2=0} The interested reader can consult \cite[Theorem 4-12, p.99]{spivak} for a
proof of this fact.

We denote by $\Omega^k(U)$ the set of all differential $k$-forms defined on some open set $U\subset\mathbb{R}^n$, i.e., all the functions of the form $U\rightarrow\Lambda^k (\mathbb{R}^n)^{\ast}$ (no regularity assumptions). 

\begin{dfn}[Integration on blocks and chains]\label{dfn:integration}
If $\omega$ is a $k$-form defined on a $k$-block $B$ in $\mathbb{R}^k$, then $\omega=f {\ed}x^1\wedge\ldots\wedge {\ed}x^k$ for a unique function $f:B\rightarrow\mathbb{R}$, where $(x^1,\ldots,x^k)$ are the canonical coordinates on $\mathbb{R}^k$, and $\ed x^i$ is the differential of $x^i$. We define the {\textbf{integral of $\omega$ over $B$}} to be
$$
\int_{B}\omega \coloneqq \int_{B}f,
$$ when the right-hand side (Riemann) integral exists. More generally, if $\omega\in\Omega^k(U)$, for an open subset $U$ of $\mathbb{R}^n$ containing the image of a singular $k$-block $c:B^k\rightarrow U$ of class $C^1$, we define the {\textbf{integral of $\omega$ over $c$}} to be $$ \int_{c} \omega \coloneqq \int_{B^k} c^{\ast}\omega, $$ when the right-hand side exists. When $c$ is a $k$-chain in $U$ of the form $c=n_1c_1+\ldots+n_pc_p$, we define 
$$
\int_{c}\omega \coloneqq n_1\int_{c_1}\omega + \ldots + n_p\int_{c_p}\omega,
$$ when each of the integrals on the right-hand side exists.
\end{dfn}
\begin{rem}
    Using Definitions \ref{dfn:boundary} and \ref{dfn:integration}, it is easy to check that if $\omega\in\Omega^{k-1}(U)$, where $U\subset\mathbb{R}^n$ is an open subset, and $c:B\to U$ is a singular $k$-block of class $C^1$, then 
    \[
    \int_{\partial c}\omega = \int_{\partial B} c^{\ast}\omega,
    \] provided the (Riemann) integrals exist.
\end{rem}

We now try to sketch a proof of the following heuristic statement: 
\begin{quote}
\emph{If $\omega$ is a continuous $(k-1)$-form defined on a $k$-block $B$, derivable  in the interior $\mathring{B}$ of this block (i.e. admitting an exterior derivative in a sense to be made precise -- see Definition~\ref{dfn: flux}), and such that $\ed\omega$ is an integrable $k$-form on $B$, then }$$\int_{\partial B}\omega=\int_{B}\ed\omega.$$
\end{quote}

\noindent Imitating the proof of Theorem \ref{thm: ftc}, take a partition $P$ of $B$ and consider $S(P)=\{\beta_1,\ldots,\beta_n\}$ the set of sub-blocks of $B$ determined by $P$. Thus, $$\int_{\partial B} \omega = \sum\limits_{\beta\in S(P)} {\int_{\partial\beta} \omega}.$$ Then, we will be able to establish the statement if, in the interior of each block $\beta\in S(P)$, there exists some $\xi_\beta$ such that 
$$
\int_{\partial\beta} \omega = \text{vol}(\beta)\ed\omega_{\xi_\beta}(e_1,\ldots,e_k),
$$ where here and henceforth $e_i$ denotes the $i$-th element of the canonical basis of $\mathbb{R}^k$.

This will be the generalisation we are seeking for the Mean Value Theorem  in \textit{equality form}. Once we have such improved version at hand, a suitable version of the second fundamental theorem of calculus will follow as an easy consequence of the preceding argument.

\subsection{Generalisation of the Trisection Lemma} 

Now we know in what sense to expect a generalisation of the MVT, the first concrete task is to extend Lemma   \ref{lemma: fundamental}  (see \cite[p.~274]{acker1} and \cite[p.5]{paper2} for more informal proofs, and \cite[p.~213]{paper} for a two-dimensional version).                
\begin{lemma}[Trisection Lemma for $k$--blocks]
\label{lemma: trisec for k-blocks}
\label{thm: lema}
Let $\omega$ be a continuous $(k-1)$-form defined on a $k$-block $B=[a_1,b_1]\times\ldots\times[a_k,b_k]\subseteq\mathbb{R}^k$. Then, there exists a $k$-block $B_1\subset\mathring{B}$ such that:
\begin{enumerate}
\item   
the sides of $B_1$ measure $1/3$ of the sides of $B$;
\item   
$\displaystyle
\frac{1}{\mathrm{vol}(B_1)}\int_{\partial{B_1}}\omega
= 
\frac{1}{\mathrm{vol}(B)}\int_{\partial{B}}\omega.$
\end{enumerate}
\end{lemma}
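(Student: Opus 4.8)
The plan is to run the exact analogue of the one-dimensional argument in Lemma~\ref{lemma: fundamental}, with the difference quotient replaced by the \emph{normalised flux}
$$
\Phi(\beta):=\frac{1}{\mathrm{vol}(\beta)}\int_{\partial\beta}\omega
$$
and the three equal sub-intervals replaced by the $3^k$ congruent sub-blocks obtained by trisecting each edge of $B$. Throughout set $h_i:=\tfrac13(b_i-a_i)$.

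\emph{Step 1 (averaging identity).} Partition $B$ into the $3^k$ sub-blocks $\beta$ of edge-lengths $h_1,\dots,h_k$. Just as in the telescoping $f(b)-f(a)=\sum_i\bigl(f(a_i)-f(a_{i-1})\bigr)$ underlying Theorem~\ref{thm: ftc}, the internal faces of adjacent sub-blocks carry opposite signs in Definition~\ref{dfn:boundary} and identical parametrisations, so they cancel and one is left with $\int_{\partial B}\omega=\sum_\beta\int_{\partial\beta}\omega$ (this needs only that the face integrals of the continuous form $\omega$ exist). Since $\mathrm{vol}(\beta)=3^{-k}\mathrm{vol}(B)$ for every sub-block, dividing by $\mathrm{vol}(B)$ yields
$$
\Phi(B)=\frac{1}{3^k}\sum_\beta\Phi(\beta),
$$
the precise analogue of $\alpha=\tfrac13\sum_i\frac{f(a_i)-f(a_{i-1})}{a_i-a_{i-1}}$: the normalised flux of $B$ is the arithmetic mean of those of the $3^k$ sub-blocks.

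\emph{Step 2 (continuous sliding family).} Let a block of fixed dimensions $h_1\times\cdots\times h_k$ translate inside $B$: for $t=(t_1,\dots,t_k)$ in the parameter block $D:=\prod_i[a_i,a_i+2h_i]$ put $\beta(t):=\prod_i[t_i,t_i+h_i]\subseteq B$ and $g(t):=\Phi(\beta(t))$. As $\mathrm{vol}(\beta(t))=\prod_i h_i$ is constant, only $\int_{\partial\beta(t)}\omega$ varies with $t$. Writing each of the $2k$ face integrals over a fixed reference $(k-1)$-block via the substitution $u_l=s_l-t_l$ ($l\neq i$), the integrand becomes a coefficient function of $\omega$ evaluated at a point depending affinely, hence jointly continuously, on $(t,u)$; over the compact set $D\times\prod_{l\neq i}[0,h_l]$ it is uniformly continuous, so each face integral—and therefore $g$—is continuous on $D$. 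The $3^k$ sub-blocks of Step~1 are exactly the $\beta(t)$ for $t$ on the grid $t_i\in\{a_i,a_i+h_i,a_i+2h_i\}$, so Step~1 says $\Phi(B)$ is the average of $g$ over these $3^k$ grid points.

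\emph{Step 3 (an interior solution, the main obstacle).} Since $D$ is compact and connected and $g$ continuous, $g(D)=[m,M]$ is a closed interval and $\Phi(B)\in[m,M]$ as an average of values of $g$. The delicate point I expect to be the crux is that the statement demands $B_1\subset\mathring{B}$, which forces $t\in\mathring{D}$ (indeed $\beta(t)\subset\mathring{B}$ iff $a_i<t_i<a_i+2h_i$ for all $i$), whereas a bare intermediate value theorem only returns a point of $D$. I would argue in two cases. If $\Phi(B)=m$ or $\Phi(B)=M$, then since $\Phi(B)$ is the mean of the $3^k$ grid values and each lies in $[m,M]$, \emph{all} grid values equal $\Phi(B)$; in particular the central block, with corner $t_i=a_i+h_i\in\mathring{D}$, has $\Phi=\Phi(B)$, and we take it as $B_1$. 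If instead $\Phi(B)\in(m,M)$, note that $\mathring{D}$ is connected, so $g(\mathring{D})$ is an interval whose closure equals $g(D)=[m,M]$ (using $D=\overline{\mathring{D}}$ and continuity of $g$); hence $g(\mathring{D})\supseteq(m,M)\ni\Phi(B)$, giving $t^\ast\in\mathring{D}$ with $g(t^\ast)=\Phi(B)$ and $B_1:=\beta(t^\ast)\subset\mathring{B}$. In every case $B_1$ has edges $\tfrac13$ those of $B$ and satisfies $\Phi(B_1)=\Phi(B)$, proving both claims; the only remaining routine verifications are the orientation-based cancellation of Step~1 and the uniform-continuity estimate of Step~2.
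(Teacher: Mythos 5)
Your proof is correct, and its skeleton is the paper's: trisect $B$ into the $3^k$ congruent sub-blocks, derive the averaging identity, slide a block of fixed edge-lengths $h_1,\dots,h_k$ over the parameter box $D=\prod_i[a_i,b_i-h_i]$, prove continuity of the normalised flux $g$, and conclude by an intermediate-value argument (the paper works with $m(x)=g(x)-\Phi(B)$, your $g$ shifted by a constant). Where you genuinely diverge is at the step you rightly flag as the crux: forcing $B_1\subset\mathring{B}$. The paper splits into cases according to the grid values: either all $3^k$ terms of the averaging identity vanish, and it takes the central block, whose corner $a+h$ lies in $\mathring{D}$; or two grid terms have opposite signs, and it concatenates straight segments from a point with $m<0$ through the interior point $a+h$ to a point with $m>0$, applying the one-dimensional IVT along this path --- interiority then follows because a segment joining a point of the convex set $D$ to an interior point lies in $\mathring{D}$ except possibly at its endpoint, and the zero cannot occur at the endpoints. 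Your device instead splits on whether $\Phi(B)$ is an extreme value of $g$ on $D$: in the extreme case the averaging identity forces all grid values to equal $\Phi(B)$ and the central block again works, while otherwise the topological fact $\overline{g(\mathring{D})}=g(D)=[m,M]$ gives $g(\mathring{D})\supseteq\;]m,M[\;\ni\Phi(B)$. Both resolutions are valid; yours avoids any path geometry and makes the interiority argument purely topological, whereas the paper's path trick is more constructive and mirrors the one-dimensional Trisection Lemma more literally. A further merit of your write-up is that Step 2 actually proves the continuity of $g$ via uniform continuity on compacts, a point the paper only asserts.
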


\begin{proof}
Indeed, adapting the proof of the one-dimensional case, we divide $B$ in $3^k$ blocks, say $\beta_1,\ldots,\beta_{3^k}$, whose $i$-th side has length  $h_i\coloneqq 1/3(b_i-a_i)$, for each $i=1,\ldots,k$,
so the integral decomposes as $$\int_{\partial B} \omega = \sum\limits_{i=1}^{3^k} \int_{\partial\beta_i} \omega.$$ Since $\text{vol}(B) = 3^k\text{vol}(\beta_i), \forall i\in\{1,\ldots,{3^k}\}$, equality goes over to the averages:
$$
\frac{3^k}{\text{vol}(B)}\int_{\partial{B}}\omega 
= 
\sum\limits_{i=1}^{3^k} \frac{1}{\text{vol}(\beta_i)}\int_{\partial\beta_i} \omega,
$$
or, equivalently, 
\begin{equation}\label{eq: m}
\sum\limits_{i=1}^{3^k}\left(\frac{1}{\text{vol}(\beta_i)}{\int_{\partial\beta_i} \omega} - \frac{1}{\text{vol}(B)}\int_{\partial{B}}\omega\right) = 0.
\end{equation}
Let $h\coloneqq (h_1,\ldots,h_k)\in\mathbb{R}^k$ and $a\coloneqq (a_1,\ldots,a_k)\in B$. Put $\beta\coloneqq[0,h_1]\times\ldots\times[0,h_k]$ and $$m: [a_1,b_1-h_1]\times\ldots\times[a_k,b_k-h_k]\rightarrow\mathbb{R}$$ defined by $$m(x)\coloneqq\frac{1}{\text{vol}(\beta)}\int_{\partial(x+\beta)}{\omega} - \frac{1}{\text{vol}(B)}\int_{\partial B} \omega,$$ where $x+\beta\coloneqq\{x+y:y\in\beta\}$. {}Now, either $(i)$ all the terms of the sum (\ref{eq: m}) are equal to zero, or $(ii)$ there exists at least one which is greater than zero and another one smaller than zero, so the function $m$ assumes negative and positive values in its domain. In case $(i)$, we will certainly have $m(a+h)=0$, and we can just take $B_1:=(a+h)+\beta\subset\mathring{B}$. For $(ii)$, let $x_0,x_1\in[a_1,b_1-h_1]\times\ldots\times[a_k,b_k-h_k]$ such that $m(x_0)<0$ and $m(x_1)>0$. Then we can define $\gamma:[0,1]\rightarrow B$ as the concatenation of the straight lines from $x_0$ to $x_{\ast}\coloneqq a+h$ and from $x_{\ast}$ to $x_1$. Then, $m\circ\gamma:[0,1]\rightarrow\mathbb{R}$ is continuous (as a composition of continuous functions) and, by construction, $m\circ\gamma(0)<0$ and $m\circ\gamma(1)>0$. Therefore, by the intermediate value theorem, there exists $\theta\in\;]0,1[$  such that $m\circ\gamma(\theta)=0$. Set $x_0\coloneqq\gamma(\theta)$ and define $B_1:=x_0+\beta\subset\mathring{B}$.
\end{proof}

Note in the above proof that we only needed the continuity of $\omega$ to ensure the continuity of $m$. Therefore, the relevant
continuity is not that of $\omega$ in the usual sense, but a weaker one concerning
the pairing between
differential forms and chains given by the integral. Given a $(k-1)$-form $\omega$, we want to ensure that the integral of $\omega$ on the boundary of  nearby $k$-blocks takes nearby values. This motivates the following definition (see \cite[p.~273]{acker1}; also compare with \cite[p.~213]{paper} and \cite[p.4]{paper2}).

\begin{dfn}[Flux-continuity]\label{dfn: cont}
A $(k-1)$-form $\omega$ defined on a $k$-block $B$ in $\mathbb{R}^k$ is {\textbf{flux-continuous}} on $B$ if, $\forall\beta=[0,h_1]\times\ldots\times[0,h_k]$, with $h_i<(b_i-a_i), i=1,\ldots,k$, the real function $$ x\mapsto\int_{\partial(x+\beta)}\omega $$ is continuous at every point $x\in B$ such that $x+\beta\subseteq B$. Here, as usual, $x+\beta:=\{x+y:y\in\beta\}$.
\end{dfn}

Thus, what we have actually proved in the proof of Lemma \ref{lemma: trisec for k-blocks} is the following (cf. \cite[p.~274]{acker1}):

\begin{lemma}[Trisection Lemma under flux-continuity]
\label{thm: fundlemma}
Let $\omega$ be a \emph{flux-continuous} $(k-1)$-form defined on a $k$-block $B=[a_1,b_1]\times\ldots\times[a_k,b_k]\subseteq\mathbb{R}^k$. Then, there exists a $k$-block $B_1\subset\mathring{B}$ such that:
\begin{enumerate}
\item   the sides of $B_1$ measure $1/3$ of the sides of $B$;
\item   $\displaystyle\frac{1}{\mathrm{vol}(B_1)}\int_{\partial{B_1}}\omega = \frac{1}{\mathrm{vol}(B)}\int_{\partial{B}}\omega.$
\end{enumerate}
\end{lemma}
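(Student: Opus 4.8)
The plan is to recycle the proof of Lemma~\ref{lemma: trisec for k-blocks} essentially verbatim, pausing only at the single place where continuity of $\omega$ was actually used and checking that \emph{flux-continuity} (Definition~\ref{dfn: cont}) is exactly the hypothesis needed there. The subdivision argument is purely combinatorial-integrative: I would partition $B$ into the $3^k$ congruent sub-blocks $\beta_1,\dots,\beta_{3^k}$ whose $i$-th side has length $h_i\coloneqq\tfrac{1}{3}(b_i-a_i)$, and use the cancellation of integrals over shared interior faces (adjacent sub-blocks induce opposite orientations on a common face) to get $\int_{\partial B}\omega=\sum_{i=1}^{3^k}\int_{\partial\beta_i}\omega$. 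This uses no regularity of $\omega$ beyond the existence of the boundary integrals, which flux-continuity presupposes. Passing to averages through $\vol(B)=3^k\vol(\beta_i)$ then produces relation~\eqref{eq: m} precisely as before.

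With $\beta\coloneqq[0,h_1]\times\dots\times[0,h_k]$ (so that $h_i<b_i-a_i$), I would then introduce the same auxiliary function
$$
m(x)\coloneqq\frac{1}{\vol(\beta)}\int_{\partial(x+\beta)}\omega-\frac{1}{\vol(B)}\int_{\partial B}\omega,
$$
defined on $[a_1,b_1-h_1]\times\dots\times[a_k,b_k-h_k]$, every point $x$ of which satisfies $x+\beta\subseteq B$. The case split is unchanged: either (i) all summands in~\eqref{eq: m} vanish, whence $m(a+h)=0$ and $B_1\coloneqq(a+h)+\beta\subset\mathring{B}$ works; or (ii) some summand is positive and another negative, so $m$ takes values of both signs at points $x_0,x_1$ of its domain. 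In case (ii) I would join $x_0$ and $x_1$ by the concatenated straight segments $\gamma$ through $a+h$, note that $\gamma$ stays in the (convex, box-shaped) domain, and apply the intermediate value theorem to $m\circ\gamma$ to find $\theta$ with $m(\gamma(\theta))=0$; then $B_1\coloneqq\gamma(\theta)+\beta$ satisfies both conclusions.

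The step requiring the most care---and indeed the entire content of the statement---is the continuity of $m$, which is the one place the proof of Lemma~\ref{lemma: trisec for k-blocks} appealed to continuity of $\omega$. Now $m$ differs only by an additive constant from the flux function $x\mapsto\int_{\partial(x+\beta)}\omega$, and Definition~\ref{dfn: cont} asserts precisely that this function is continuous at every $x$ with $x+\beta\subseteq B$; since the chosen $\beta$ has sides $h_i<b_i-a_i$, the definition applies to it verbatim. Hence $m$, and therefore $m\circ\gamma$, is continuous and the intermediate value theorem is available. I do not anticipate a genuine obstacle: the lemma is an observation rather than a new argument, so the only diligence required is bookkeeping---verifying that the specific $\beta$ and the domain of $m$ fit the hypotheses of Definition~\ref{dfn: cont}, and confirming that neither the telescoping of faces nor the averaging step secretly used pointwise continuity of $\omega$.
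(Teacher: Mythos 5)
Your proposal is correct and coincides with the paper's own treatment: the paper proves this lemma precisely by observing that the argument for Lemma~\ref{lemma: trisec for k-blocks} (subdivision into $3^k$ sub-blocks, averaging, and the intermediate value theorem applied to $m\circ\gamma$) only ever used continuity of $\omega$ to guarantee continuity of the flux function $x\mapsto\int_{\partial(x+\beta)}\omega$, which is exactly what Definition~\ref{dfn: cont} supplies. Your bookkeeping check that the chosen $\beta$ satisfies $h_i<b_i-a_i$ and that the face-cancellation step needs no pointwise regularity matches the paper's intent exactly.
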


At least for $k>1$, it's clear that this version of the lemma now includes e.g. those $(k-1)$-forms whose component functions may be discontinuous on a countable subset of its domain $k$-block. In fact, it is easy to produce more interesting examples illustrating that the flux-continuity condition is way weaker than the usual continuity for the $(k-1)$-form.

\begin{example}
   Let $B=\prod_{i=1}^k[a_i,b_i]\subseteq\mathbb{R}^k$ be a $k$-block, fix a continuous function $F:\,B\to\mathbb R$ and let
\[ S:\prod_{i=2}^k[a_i,b_i]\to\mathbb R \]
be any bounded function which has at least one point of discontinuity. Define the function
\[ A(x_1,x_2,\dots,x_k):=F(x_1,x_2,\dots,x_k)+S(x_2,\dots,x_k),\qquad x=(x_1,\ldots,x_k)\in B,\]
and consider the $(k-1)$-form
\begin{equation}\label{eq:omega-general}
\omega(x):=A(x)\,{\ed}x_2\wedge\cdots\wedge {\ed}x_k.
\end{equation}

The coefficient $A$ is continuous in the $x_1$ variable (because $F$ is), but may be discontinuous in the remaining variables (because of $S$). In fact, assume $S$ is discontinuous at some point $(y_2^0,\dots,y_k^0)\in\prod_{i=2}^k[a_i,b_i]$. Let $p=(p_1,y_2^0,\dots,y_k^0)\in B$ be any point with that projection. We are going to show that the $(k-1)$-form $\omega$ defined in \eqref{eq:omega-general} is not continuous at $p$, but $\omega$ is flux-continuous on $B$.

\emph{(Non-continuity at $p$)} Since $S$ is discontinuous at $(y_2^0,\dots,y_k^0)$ there exists a sequence $y^{(n)}=(y_2^{(n)},\dots,y_k^{(n)})\to(y_2^0,\dots,y_k^0)$ for which $S(y^{(n)})\to s_1$ and another sequence $z^{(n)}\to(y_2^0,\dots,y_k^0)$ for which $S(z^{(n)})\to s_2$ with $s_1\ne s_2$. Now using the first component $p_1$ of $p$, set $x^{(n)}:=(p_1,y_2^{(n)},\dots,y_k^{(n)})$ and $w^{(n)}:=(p_1,z_2^{(n)},\dots,z_k^{(n)})$. Then
\[
A(x^{(n)})=F(x^{(n)})+S(y^{(n)})\to F(p)+s_1,\qquad A(w^{(n)})=F(w^{(n)})+S(z^{(n)})\to F(p)+s_2,
\]
and consequently the coefficient function $A$ (hence the form $\omega$) does not admit a single limit at $p$. Thus $\omega$ is not continuous at $p$.

\emph{(Flux-continuity on $B$)} Fix $h=(h_1,\dots,h_k)$ with $0<h_i<b_i-a_i$ and let $x=(x_1,\dots,x_k)\in B$ be such that $x+\beta_h\subset B$, where $\beta_h:=[0,h_1]\times\ldots\times[0,h_k]$. We compute the integral of $\omega$ on the boundary of the block $x+\beta_h$. The boundary is the union of the $2k$ faces
\[ F_i^-:=\{y\in x+\beta_h: y_i=x_i\},\qquad F_i^+:=\{y\in x+\beta_h: y_i=x_i+h_i\},\qquad i=1,\dots,k,\]
each endowed with the outward orientation.

We claim that for each $i\ge 2$ both integrals of $\omega$ on $F_i^-$ and $F_i^+$ are zero. Indeed, if $i\ge 2$ then the inclusion map of the face $F_i^{\pm}$ into $\mathbb R^k$ pulls back the $1$-form ${\ed}x_i$ to the zero $1$-form on the face (because the coordinate $x_i$ is constant on the face). Since
\[ \omega=A(x)\,{\ed}x_2\wedge\cdots\wedge {\ed}x_k\]
contains the factor ${\ed}x_i$ for every $i\ge 2$, its pullback to $F_i^{\pm}$ is zero and therefore
\[ \int_{F_i^{\pm}}\omega=0\quad\text{for }i\ge 2.\]
Consequently only the two faces $F_1^-$ and $F_1^+$ may contribute to the boundary integral. On these faces the coordinates are naturally $(y_2,\dots,y_k)\in\prod_{i=2}^k[x_i,x_i+h_i]$ and the pullback of ${\ed}x_2\wedge\cdots\wedge {\ed}x_k$ is the standard volume form $dy_2\wedge\cdots\wedge dy_k$ on the face. Taking into account the outward orientation (the face $F_1^+$ has outward normal $+e_1$ and $F_1^-$ has outward normal $-e_1$) we obtain
\begin{align*}
\int_{\partial(x+\beta_h)}\omega
&=\int_{F_1^+}A(y)\,dy_2\cdots dy_k -\int_{F_1^-}A(y)\,dy_2\cdots dy_k\\
&=\int_{\prod_{i=2}^k[x_i,x_i+h_i]}\Big[A(x_1+h_1,y_2,\dots,y_k)-A(x_1,y_2,\dots,y_k)\Big]\,dy_2\cdots dy_k.
\end{align*}

Since $A=F+S$ and $S$ does not depend on $x_1$ the integrand simplifies to
\[ A(x_1+h_1,y_2,\dots,y_k)-A(x_1,y_2,\dots,y_k) = F(x_1+h_1,y_2,\dots,y_k)-F(x_1,y_2,\dots,y_k).\]
As $F$ is continuous on $B$, the function
\[ x\longmapsto F(x_1+h_1,x_2,\dots,x_k)-F(x_1,x_2,\dots,x_k) \]
is continuous on the set of admissible $x$, and therefore the integral above depends continuously on $x$. Thus $I_h(x)=\int_{\partial(x+\beta_h)}\omega$ is continuous in $x$ for every admissible $h$, which is precisely the statement that $\omega$ is flux-continuous.

To give an explicit $2$-dimensional example, let $k=2$, $B=[-1,1]\times[-1,1]\subset\mathbb R^2$, and define
\[ \omega(x,y):=(\sin x + H(y))\,dy, \]
where $H(y)$ is the Heaviside step function
\[ H(y)=\begin{cases}0,&y<0,\\1,&y\ge0.\end{cases} \]
The form $\omega$ is not continuous along the horizontal line $y=0$ because of the jump of $H$. Indeed, the coefficient of $dy$ has distinct one-sided limits at any point $(x,0)$. Nevertheless, fixing any $\beta_h=[0,h_1]\times[0,h_2]$ and any translation $x=(x_1,x_2)$ with $x+\beta_h\subset B$, a direct computation of the boundary integral gives
\[ \int_{\partial(x+\beta_h)}\omega = h_2\big(\sin(x_1+h_1)-\sin x_1\big). \]
(Only the two vertical sides contribute; the horizontal sides lie in directions tangent to $dy$ and do not contribute.) The right-hand side is smooth in $x$, hence continuous, so $\omega$ is flux-continuous on $B$ although it fails to be continuous along $y=0$.

These examples show the mechanism behind the failure of a flux-continuous form to be continuous: flux-continuity tests continuity of certain finite differences of the coefficients of a form (those finite differences which appear as differences of the coefficient along the face-pair for one coordinate). Any discontinuous dependence in coordinates orthogonal to that distinguished direction will cancel in the boundary integral and so will not be detected by the flux-continuity condition.

One can vary the construction by choosing different distinguished coordinates (replace the factor ${\ed}x_2\wedge\cdots\wedge {\ed}x_k$ by a wedge product that omits some other ${\ed}x_j$) or by making $S$ discontinuous on a set of positive measure, or even dense sets; the same cancellation effect persists.
\end{example}

\section{The exterior derivative as an infinitesimal flux}
\label{sec: Exterior derivative}

We recall some standard definitions to fix notation. Let $\omega\in\Omega^{k-1}(U)$, where $U\subset\R^n$ is an open subset, and write 
\begin{equation}
\label{eq:general_form}
    \omega = \sum_I \omega_I \ed x^I,
\end{equation} where the sum is taken over all increasing multi-indices $I\coloneqq\{i_1<\ldots<i_{k-1}\}\subseteq\{1,\ldots,n\}$, we have $\ed x^I:=\ed x^{i_1}\wedge\ldots\wedge \ed x^{i_{k-1}}$, and the component functions $\omega_I:U\rightarrow\mathbb{R}$ are given by
\[
\omega_I(x)=\omega_x(\partial_{i_1},\ldots,\partial_{i_{k-1}}).
\] Here $(x^1,\ldots,x^n)$ are the canonical coordinates on $\mathbb{R}^n$. If $x\in U$, we say $\omega$ is \textbf{continuous at $x$} if the component functions $\omega_I$ are continuous at $x$. We could define differentiability
analogously, replacing ``continuous" by ``differentiable". However, for our purposes, it will be more convenient to say $\omega$ is \textbf{differentiable} at $x$ when there exists a linear map
$$
\omega'(x):\mathbb{R}^n\rightarrow\Lambda^{k-1}(\mathbb{R}^n)^{\ast},
$$ 
such that 
$$
\lim_{h\to 0}\frac{\left\Vert\omega(x+h)-\omega(x)-\omega'(x)h\right\Vert}{|h|}=0, 
$$ 
where 
$\left\Vert\eta\right\Vert\coloneqq \text{sup}\{|\eta(v_1,\ldots,v_{k-1})|:|v_1|=\ldots=|v_{k-1}|=1\}$
is the standard operator norm\footnote{$\left\Vert\cdot{}\right\Vert$ could be any norm: they are all equivalent on a finite-dimensional real vector space. We made that particular choice for some minor later convenience.} for $\eta\in\Lambda^{k-1}(\mathbb{R}^n)^{\ast}$,
so that
\begin{equation}\label{eq: des1}
|\eta(v_1,\ldots,v_{k-1})|\leq\left\Vert\eta\right\Vert|v_1|\ldots|v_{k-1}|,\quad\forall v_1,\ldots,v_{k-1}\in\mathbb{R}^n.
\end{equation}

\subsection{The standard definition of the exterior derivative}
Let us briefly discuss the usual notion of exterior derivative in $\mathbb{R}^n$ from vector calculus.

\begin{dfn}[Standard exterior derivative]\label{dfn: dif_standard}
Let $\omega$ be a $(k-1)$-form on some open subset $U$ of $\mathbb{R}^n$. If $\omega$ is differentiable at some point $x\in U$, then we define the {\textbf{(usual) exterior derivative}} of $\omega$ at $x$ to be the $k$-linear form $\ed\omega_x$ in $\mathbb{R}^n$ given by
$$
\ed\omega_x(v_1,\ldots,v_k) := \sum_{i=1}^{k} (-1)^{i+1}\omega'(x)v_i(v_1,\ldots,\hat{v_i},\ldots,v_k),\quad\forall v_1,\ldots,v_k\in\mathbb{R}^n
$$
\end{dfn}

An equivalent formulation, always assuming $\omega$ is differentiable at $x\in U$, is to define the exterior derivative at $x$ in canonical
coordinates   
$\displaystyle
\omega = \sum_I \omega_I \ed x^I$, by
$$
\ed\omega_x(v_1,\ldots,v_k) := \sum_{I,j} \frac{\partial\omega_I}{\partial x^j}(x)\ed x^j\wedge {\ed}x^I(v_1,\ldots,v_k),\quad\forall v_1,\ldots,v_k\in\mathbb{R}^n.
$$
The equivalence
is straightforward, writing  $v_i = \sum_{j=1}^n v_i^j e_j$ in the canonical basis, for each ${i=1,\ldots,k}$, and noting that:
\begin{itemize}
\item for each $i=1,\ldots,k$, 
$$
\omega'(x)v_i =\sum_{j=1}^n v_i^j\omega'(x)e_j = \sum_{I,j}v_i^j\frac{\partial\omega_I}{\partial x^j}{\ed}x^I;
$$
\item for each multi-index $I=\{i_1<\ldots<i_{k-1}\}$ and $j=1,\ldots,n$, we have
\begin{eqnarray*}
{\ed}x^j\wedge {\ed}x^I(v_1,\ldots,v_k) &=& \begin{vmatrix}
v_1^j &  v_2^j  & \ldots & v_k^j\\
v_1^{i_1}  &  v_2^{i_1} & \ldots & v_k^{i_1}\\
\vdots & \vdots & \ddots & \vdots\\
v_1^{i_k}  &  v_2^{i_k} &\ldots & v_k^{i_k}
\end{vmatrix}\\
&=& \sum_{i=1}^k(-1)^{i+1}v_i^j {\ed}x^I(v_1,\ldots,\hat{v_i},\ldots,v_k).
\end{eqnarray*}
\end{itemize}

\subsection{The infinitesimal flux interpretation}

Before presenting Acker’s definition of the exterior derivative (cf.~\cite[p.~279]{acker1}), we first provide a heuristic argument, motivated by attempting to reverse-engineer the property we wish to obtain. The classical Stokes’ theorem (see \cite[Theorem 4-13, p. 102]{spivak}) for a differential $(k-1)$-form $\omega$ of class $C^1$ on an open set $U \subset \mathbb{R}^n$ asserts that, for every $C^1$ singular $k$-block $c:B^k \to U$, one has
\[
\int_{\partial c} \omega = \int_{c} \ed \omega,
\]
which can be rewritten as
\[
\int_{\partial B} c^{\ast}\omega = \int_{B} c^{\ast}(\ed \omega).
\]
Fixing a point $x \in U$ and considering the limit in which the singular $k$-block $c$ shrinks to $x$, so that $B$ shrinks to a point $p \in B$ with $c(p) = x$, then the continuity of $c^{\ast}(\ed\omega)=\ed(c^{\ast}\omega)$ yields
\[
\ed\omega_{x}(c'(p)e_1,\ldots,c'(p)e_k)
   = \lim_{B \to p} \frac{1}{\mathrm{vol}(B)}
     \int_{\partial B} c^{\ast}\omega.
\]
This suggests that the exterior derivative at $x$ may be interpreted as the ``flux density'' of $\omega$ through infinitesimal $k$-blocks around $x$. Motivated by this observation, and in the spirit of Lemma~\ref{lemma: 0-forms dim 1}, we introduce the following definitions.

\begin{dfn}\label{dfn: flux-integrable}
Let $U \subset \mathbb{R}^n$ be open and $x \in U$. A $(k-1)$-form $\omega$ on $U$ is said to be \textbf{flux-integrable around $x$} if, for every $C^1$ map $\varphi: A \to U$ from an open subset $A \subset \mathbb{R}^k$ with $\varphi(p) = x$, and every $k$-block $B \subset A$ with $p \in B$, the pullback $\varphi^{\ast}\omega$ is integrable over $\partial B$.
\end{dfn}

\begin{rem}[Flux-integrability of locally bounded forms with $(k-1)$-null discontinuities]
\label{rem:flux-null}
Let $U \subset \mathbb{R}^n$ be open and let $\omega \in \Omega^{k-1}(U)$ be \emph{locally bounded}, with $k>1$. 
Let $S \subset U$ denote the set of discontinuities of $\omega$, and assume that $S$ is \emph{$(k-1)$-null}: for every $\varepsilon>0$ there exists a countable collection of balls $\{B(x_i,r_i)\}_{i=1}^\infty$ in $\mathbb{R}^n$ such that
\[
S \subset \bigcup_{i=1}^\infty B(x_i,r_i), 
\qquad
\sum_{i=1}^\infty r_i^{\,k-1} < \varepsilon.
\]

\medskip
\noindent
We claim that $\omega$ is flux-integrable around every point of $U$, in the sense of Definition~\ref{dfn: flux-integrable}.

\medskip
\noindent
\textbf{Step 1: Images of null sets under Lipschitz maps.} 
If $E \subset \mathbb{R}^{k-1}$ is $(k-1)$-null and $f:E \to \mathbb{R}^n$ is Lipschitz with constant $L$, then $f(E)$ is $(k-1)$-null: covering $E$ by balls $\{B(x_i,r_i)\}$ with $\sum r_i^{\,k-1}<\varepsilon$, we have $f(E) \subset \bigcup_i B(f(x_i), L r_i)$, and $\sum_i (L r_i)^{\,k-1} \le L^{\,k-1} \varepsilon$.

\medskip
\noindent
\textbf{Step 2: Preimages under $C^1$ maps of maximal rank on compact domains.} 
Let $K \subset \mathbb{R}^{k-1}$ be compact and $f: K \to \mathbb{R}^n$ be $C^1$. Define
\[
R := \{x \in K : \operatorname{rank} Df(x) = k-1\}, \qquad S' := K \setminus R.
\]
At each $x \in R$, the Rank Theorem guarantees the existence of a neighbourhood $U_x \subset K$ such that $f|_{U_x}$ is a $C^1$ diffeomorphism onto its image, and in particular bi-Lipschitz. Covering the compact set $R$ with finitely many such neighbourhoods $\{U_{x_\alpha}\}_{\alpha=1}^m$, we deduce that for any $(k-1)$-null set $T \subset f(K)$, each preimage $f^{-1}(T)\cap U_{x_\alpha}$ is $(k-1)$-null; hence the union over $\alpha$ is $(k-1)$-null. The preimage over $S'$ is treated separately in Step~3.

\medskip
\noindent
\textbf{Step 3: Application to flux-integrability.} 
Fix $x \in U$ and let $\varphi: A \to U$ be $C^1$ with $\varphi(p)=x$, and let $B=\prod\limits_{i=1}^k[a_{i0},a_{i1}] \subset A$ be a $k$-block containing $p$. For each $i\in\{1,\dots,k\}$ and $j\in\{0,1\}$, let $F_{ij}$ be the smooth insertion map defining the faces of $B$ as in Definition~\ref{dfn:boundary}, and denote by 
\[
\Delta^{k-1} \coloneqq \prod_{l\neq i} [a_{l0},a_{l1}]
\] 
the $(k-1)$-block domain of the face. Define
\[
c_{ij} \coloneqq \varphi \circ F_{ij}: \Delta^{k-1} \to U.
\] 
Then each $c_{ij}$ is $C^1$ because it is the composition of the $C^1$ map $\varphi$ with the smooth map $F_{ij}$.  

Let $R_{ij} \subset \Delta^{k-1}$ denote the points where $Dc_{ij}$ has maximal rank $k-1$, and $S'_{ij} = \Delta^{k-1}\setminus R_{ij}$. Then:

\begin{itemize}
    \item On $R_{ij}$, $c_{ij}$ is a $C^1$ immersion. By Step~2, it is locally bi-Lipschitz on a finite covering of the compact face, so the preimage $c_{ij}^{-1}(S) \cap R_{ij}$ is $(k-1)$-null.
    \item On $S'_{ij}$, each $(k-1)$-minor of $Dc_{ij}$ vanishes. The coefficient functions of $c_{ij}^*\omega$ are finite linear combinations of these minors multiplied by the locally bounded coefficients of $\omega$, so all the coefficients vanish on $S'_{ij}$. Hence the pullback form $c_{ij}^*\omega$ itself vanishes identically on $S'_{ij}$ and is continuous there.
\end{itemize}

Hence, the discontinuities of $c_{ij}^*\omega$ lie in $(c_{ij}^{-1}(S) \cap R_{ij}) \cup S'_{ij}$. The first set is $(k-1)$-null, and the second contributes zero to the integral. By the classical Riemann integrability criterion, $c_{ij}^*\omega$ is integrable over $\Delta^{k-1}$. Summing over all faces, $\varphi^*\omega$ is integrable over $\partial B$.

\medskip
\noindent
\textbf{Conclusion.} 
Locally bounded $(k-1)$-forms whose discontinuity set is $(k-1)$-null are flux-integrable. Discontinuities contribute nothing to boundary integrals: either the pullback vanishes at points of non-maximal rank, or the preimage of discontinuities under the local $C^1$ immersions of maximal rank is $(k-1)$-null.
\end{rem}

\begin{dfn}[Exterior derivative as infinitesimal flux]\label{dfn: flux}
Let $\omega \in \Omega^{k-1}(U)$, where $U \subset \mathbb{R}^n$ is open. Suppose that $\omega$ is flux-integrable around $x\in U$, in the sense of Definition \ref{dfn: flux-integrable}. We say that $\omega$ is \textbf{derivable at $x$} if there exists a $k$-form $\eD\omega_x\in\Lambda^k(\mathbb{R}^n)^{\ast}$ such that, for every choice of $v_1,\ldots,v_k \in \mathbb{R}^n$,
\begin{equation}\label{dfn: D}
\eD\omega_x(v_1,\ldots,v_k)
  = \lim_{B \to p} \frac{1}{\mathrm{vol}(B)}
    \int_{\partial B} \varphi^{\ast}\omega,
\end{equation}
for every $C^1$ map $\varphi: A \to U$ from an open subset $A$ of $\mathbb{R}^k$ with $\varphi(p)=x$, $p\in A$, and $\varphi'(p)e_i = v_i$ for $i = 1,\ldots,k$. The limit in~\eqref{dfn: D} is understood in the following sense: for every $\varepsilon > 0$ and $K > 1$, there exists $\delta > 0$ such that, whenever $B \subset A$ is a $k$-block containing $p$ with $L(B) < \delta$ and $L(B)/l(B) < K$ (where $L(B)$ and $l(B)$ denote the lengths of the longest and shortest sides of $B$, respectively), one has
\[
\left|
  \eD\omega_x(v_1,\ldots,v_k)
  - \frac{1}{\mathrm{vol}(B)}\int_{\partial B} \varphi^{\ast}\omega
\right| < \varepsilon.
\]
In this case, $\eD\omega_x$ is called the \textbf{exterior derivative of $\omega$ at $x$}.
\end{dfn}

\textbf{Remarks:}\begin{itemize}
\item Although Definition~\ref{dfn: flux} requires the limit to be taken uniformly over all $C^1$ parametrisations $\varphi$ with prescribed first-order data at $p$, this condition is natural from the geometric viewpoint: it ensures that the resulting operator is compatible with arbitrary singular chains and depends only on the first-order behaviour of $\varphi$ at $p$.

\item The condition $L(B)/l(B)<K$ ensures a uniform bound on the ratio $\text{diam}(B)\cdot\text{area}(\partial B)/\text{vol}(B)$; this will play an important role in showing that the above definition of the exterior derivative coincides with the usual one in the differentiable case (Theorem \ref{thm: compatibility}).

\item This definition is very geometric: the exterior derivative of $\omega$ at a point $x$ is the limit of the ``average'' flux of $\omega$ across the boundary of small blocks shrinking (in a controlled way) to $x$. This contrasts with Definition \ref{dfn: dif_standard}, which involves a somewhat computational notion of differentiability. Here lies an important point: the distinction between the notions of differentiability and derivability.

\item The notion of derivability is strictly weaker than classical differentiability. In particular, $\omega$ need not be continuous at $x$ for $\eD\omega_x$ to exist. For instance, the $1$-form $\omega$ defined on $\mathbb{R}^2$ by $\omega=f\ed x^1$, where
$$
f(x_1,x_2)=\left\{\begin{array}{rc}
1, &\mbox{if}\quad (x_1,x_2)=(0,0),\\
0, &\mbox{if}\quad (x_1,x_2)\neq(0,0),
\end{array}\right.
$$
is discontinuous at the origin but has a well-defined exterior derivative $\eD\omega_{(x_1,x_2)}=0$ at every point $x=(x_1,x_2)\in\mathbb{R}^2$; see Example \ref{ex:constant-null}. 
\end{itemize}

\begin{example}[Locally bounded, almost constant $(k-1)$-forms with $(k-1)$-null discontinuities]
\label{ex:constant-null}

Let $k>1$ and $U \subset \mathbb{R}^n$ be an open set. Let $S \subset U$ be a closed $(k-1)$-null set, and consider a locally bounded $(k-1)$-form
\[
\omega = \sum_I \omega_I \ed x^I \in \Omega^{k-1}(U),
\]
whose coefficient functions $\omega_I$ are \emph{constant} on $U \setminus S$, for every $I=\{i_1<\ldots<i_k\}\subset\{1,\ldots,n\}$.

Fix any point $x \in U$ and a $C^1$ map $\varphi: A \to U$ with $\varphi(p) = x$, and let $B \subset A$ be a $k$-block containing $p$. Denote by $c_{ij}$ the singular $(k-1)$-faces of $B$ as in Definition~\ref{dfn:boundary}. Then, for each face $c_{ij}$, the pullback $c_{ij}^*\omega$ is constant on the face, except possibly on the preimage $c_{ij}^{-1}(S)$. Now, under our assumptions, by the argument in Step 3 of Remark \ref{rem:flux-null}, the integral of the pullback over $c_{ij}^{-1}(S)$ vanishes. Moreover, as for the constant part of $c_{ij}^*\omega$, it gives equal and opposite contributions on opposite faces due to the alternating signs in the boundary sum:
\[
\int_{\partial B} \varphi^*\omega = \sum_{i=1}^k \sum_{j=0}^1 (-1)^{i+j} \int_{c_{ij}} c_{ij}^*\omega = 0.
\]

Hence, for any $B$,
\[
\int_{\partial B} \varphi^*\omega = 0,
\]
and taking the limit as $B \to p$ in Definition~\ref{dfn: flux} yields
\[
\eD \omega_x = 0.
\]
In conclusion, such locally bounded, almost constant $(k-1)$-forms with $(k-1)$-null discontinuities are derivable everywhere and have zero exterior derivative.
\end{example}

\subsection{The mean value equality for differential forms}
The results discussed in the remainder of Section \ref{sec: Exterior derivative} stem from the bibliography according
to a pattern: they are stated and argued for in \cite{acker1,paper2}  and proved
as a low-dimensional instance in \cite{paper}. We therefore propose a systematic exposition of the theory with self-contained and fully rigorous proofs of the general statements. Let us begin with the mean value theorem for differential forms: 

\begin{thm}[M.V.T. for differential forms]\label{thm: mvt}
Let $\omega$ be a $(k-1)$-form defined on a $k$-block $B=[a_1,b_1]\times\ldots\times[a_k,b_k]\subset\mathbb{R}^k$ such that:
\begin{enumerate}
\item $\omega$ is flux-continuous (Definition \ref{dfn: cont});
\item $\omega$ is derivable on $\mathring{B}$ (Definition \ref{dfn: flux});
\end{enumerate} 
then there exists $\xi\in\mathring{B}$ such that 
$$
{\eD\omega}_\xi(e_1,\ldots,e_k) = \frac{1}{\mathrm{vol}(B)}\int_{\partial{B}}\omega.
$$
\end{thm}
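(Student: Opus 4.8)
The plan is to transpose the one-dimensional argument of Theorem~\ref{thm: one_dim} to the present setting, replacing the scalar Trisection Lemma by its $k$-block counterpart under flux-continuity, Lemma~\ref{thm: fundlemma}. First I would iterate that lemma to build a nested sequence of closed $k$-blocks
\[
B = B_0 \supset B_1 \supset B_2 \supset \cdots, \qquad B_n \subset \mathring{B}_{n-1}\ (n\ge 1),
\]
each inheriting its two conclusions: property~(1) forces the $i$-th side of $B_n$ to have length $3^{-n}(b_i-a_i)$, and property~(2) propagates the normalised flux,
\[
\frac{1}{\mathrm{vol}(B_n)}\int_{\partial B_n}\omega = \frac{1}{\mathrm{vol}(B)}\int_{\partial B}\omega \quad\text{for every } n.
\]
To license the iteration I must first note that flux-continuity passes to sub-blocks: if $B'\subseteq B$ has sides $[a_i',b_i']$ and $\beta'=\prod_i[0,h_i']$ with $h_i'<b_i'-a_i'$, then $h_i'<b_i-a_i$ too, so $x\mapsto\int_{\partial(x+\beta')}\omega$ is continuous wherever $x+\beta'\subseteq B'\subseteq B$ by hypothesis~(1); hence $\omega$ stays flux-continuous on each $B_n$ and Lemma~\ref{thm: fundlemma} applies at every stage.

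Next, as a decreasing sequence of non-empty compacta in the compact Hausdorff space $B$, the intersection $X\coloneqq\bigcap_n B_n$ is non-empty, and since the side-lengths tend to $0$ it reduces to a single point $\xi$, which lies in $\mathring B$ because $B_1\subset\mathring B$. The decisive structural feature is that trisection rescales every side by the \emph{same} factor $1/3$, so all the aspect ratios coincide,
\[
\frac{L(B_n)}{l(B_n)} = \frac{L(B)}{l(B)} \eqqcolon K_0 \quad\text{for all } n,
\]
which is exactly what renders $\{B_n\}$ admissible for the ``nice shrinking'' limit of Definition~\ref{dfn: flux}.

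Finally I would invoke derivability at $\xi$, granted by hypothesis~(2), through the identity chart $\varphi=\mathrm{id}$ on $A\coloneqq\mathring B$, for which $\varphi'(\xi)e_i=e_i$. Given $\varepsilon>0$ and any fixed $K>K_0$, Definition~\ref{dfn: flux} yields $\delta>0$; since $L(B_n)=3^{-n}L(B)\to 0$ there is $n_0$ with $L(B_n)<\delta$ for $n\ge n_0$, while each such $B_n$ contains $\xi$ and satisfies $L(B_n)/l(B_n)=K_0<K$. Consequently
\[
\left|\eD\omega_\xi(e_1,\ldots,e_k)-\frac{1}{\mathrm{vol}(B_n)}\int_{\partial B_n}\omega\right|<\varepsilon \qquad (n\ge n_0),
\]
and replacing the subtracted term by the constant $\tfrac{1}{\mathrm{vol}(B)}\int_{\partial B}\omega$ through property~(2), then letting $\varepsilon\to 0$, delivers the asserted equality. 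The main obstacle I anticipate is precisely this compatibility with the aspect-ratio restriction built into Definition~\ref{dfn: flux}: the argument hinges on trisection preserving $L(B_n)/l(B_n)$ exactly, keeping the blocks within the cone of non-degenerate shrinkings along which $\eD\omega_\xi$ is defined; a subdivision scheme letting the blocks grow arbitrarily thin would be incompatible with the derivability limit and would break the final step.
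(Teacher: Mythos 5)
Your proposal is correct and takes essentially the same route as the paper's own proof: iterate Lemma~\ref{thm: fundlemma} to produce nested blocks with constant normalised flux and constant aspect ratio, extract the unique point $\xi\in\mathring{B}$ by compactness, and apply derivability at $\xi$ via $\varphi=\mathrm{id}$ with $K$ exceeding the common ratio $L(B)/l(B)$. Your explicit check that flux-continuity restricts to sub-blocks (needed to license the iteration) is a small point of rigour that the paper leaves implicit.
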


\begin{proof}
Applying the Trisection Lemma \ref{thm: fundlemma}, we obtain a descending chain of non-empty compact sets $(B_n)_{n\in\mathbb{N}}$ such that, for each $n\in\mathbb{N}$, we have:
\begin{enumerate}
\item[(i)] $B_n \subset \mathring{B}$;
\item[(ii)] the sides of $B_{n+1}$ measure $1/3$ of the sides of $B_n$;
\item[(iii)] 
$\displaystyle
\frac{1}{\text{vol}(B_n)}\int_{\partial{B_n}}\omega
= \frac{1}{\text{vol}(B)}\int_{\partial{B}}\omega.$
\end{enumerate}

In particular, $\{B_n\}$ is a family of closed sets in the compact subspace $B$, satisfying the finite intersection property, so by the well-known characterization of compact sets we have $\bigcap{B_n}\neq\emptyset$. Moreover, as $\text{diam}(B_n)\rightarrow 0$, there exists a unique element $\xi\in\bigcap{B_n}$, i.e. $\{\xi\}=\bigcap{B_n}$. Since, by (i),  $\xi\in\mathring{B}$  and $\omega$ is derivable in the interior of $B$, we claim that conditions (ii) and (iii) on $(B_n)$  give
$$ 
\eD\omega_{\xi}(e_1,\ldots,e_k) = \lim_{n\to\infty} \frac{1}{\text{vol}(B_n)}\int_{\partial{B_n}}\omega = \frac{1}{\text{vol}(B)}\int_{\partial{B}}\omega, 
$$ in the terms of Definition \ref{dfn: flux}. Indeed, by (iii) we only
need to prove the first equality. Fix 
$$
K>\frac{\max\{|b_i-a_i|:i=1,\ldots,k\}}{\min\{|b_i-a_i|:i=1,\ldots,k\}}\geq 1
$$ and let $\varepsilon>0$. By the derivability of $\omega$ on $\mathring{B}\ni\xi$, (taking $\varphi=1_{\mathbb{R}^k}$) there exists $\delta>0$ such that, if $C$ is a $k$-block inside $\mathring{B}$ with $L(C)<\delta$, $\xi\in C$, and $L(C)/l(C)<K$, we have
$$
\left|\eD\omega_{\xi}(e_1,\ldots,e_k) - \frac{1}{\text{vol}(C)}\int_{\partial C}\omega\right|<\varepsilon.                 
$$

Now, by  (ii), there exists $n_0\in\mathbb{N}$ such that $L(B_{n})<\delta$ for every $n\geq n_0$. Moreover, by definition of $K$ and $\xi$, we have $L(B_n)/l(B_n)<K$ and $\xi\in B_n$, for every $n\in\mathbb{N}$. Therefore,
$$
n\geq n_0\Rightarrow 
\left|\eD\omega_{\xi}(e_1,\ldots,e_k) - \frac{1}{\text{vol}({B_n})}\int_{\partial{B_n}}\omega\right|<\varepsilon,
$$
which proves the claim.
\end{proof}

With the same technique as in the one-dimensional Theorem \ref{thm: one_dim}, we have proved a generalised mean value theorem in equality form. Thus, we also obtain the following higher-dimensional version of Theorem \ref{thm: ftc}:

\begin{prop}[Divergence theorem]\label{prop: stokes}
If $\omega$ is a $(k-1)$-form defined on a $k$-block $B\subset\mathbb{R}^k$ such that:
\begin{enumerate}
\item $\omega$ is flux-continuous;
\item $\omega$ is derivable on $\mathring{B}$;
\item $\eD\omega$ is (Riemann) integrable on $B$,
\end{enumerate}
then $$ \int_{\partial B}\omega = \int_B \eD\omega. $$
\end{prop}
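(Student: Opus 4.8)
The plan is to mirror verbatim the one-dimensional argument of Theorem \ref{thm: ftc}, substituting the block version of the mean value theorem (Theorem \ref{thm: mvt}) for the classical MVT. Throughout I would write $\eD\omega = g\,\ed x^1\wedge\cdots\wedge\ed x^k$ for the (unique) coefficient $g:B\to\mathbb{R}$, so that $\int_B\eD\omega=\int_B g$ and $\eD\omega_\xi(e_1,\ldots,e_k)=g(\xi)$ for every $\xi$; the hypothesis that $\eD\omega$ is Riemann integrable means precisely that $g$ is bounded and Riemann integrable on $B$, which is what makes upper and lower Darboux sums available.

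First I would fix an \emph{arbitrary} partition $P$ of $B$ and let $S(P)=\{\beta_1,\ldots,\beta_N\}$ be the induced grid of sub-blocks. The initial step is the additivity
\[
\int_{\partial B}\omega = \sum_{j=1}^N \int_{\partial\beta_j}\omega,
\]
which is exactly the interior-face cancellation already invoked in the heuristic discussion preceding Lemma \ref{thm: fundlemma} and in its proof: each interior $(k-1)$-face is shared by two adjacent sub-blocks and enters their boundary chains with opposite orientation, so at the level of $(k-1)$-chains one has $\sum_j\partial\beta_j=\partial B$. Here each summand $\int_{\partial\beta_j}\omega$ is well-defined because flux-continuity of $\omega$ on $B$ restricts to flux-continuity on every sub-block $\beta_j=x_j+\beta^{(j)}$ (the defining admissibility $h_i<b_i-a_i$ is inherited).

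Next I would apply Theorem \ref{thm: mvt} to each $\beta_j$, after checking that both hypotheses descend to sub-blocks: flux-continuity restricts as just noted, and since $\mathring{\beta_j}\subseteq\mathring{B}$ — the open sides $(c_i,d_i)$ of a sub-block lie inside the open sides $(a_i,b_i)$ of $B$ — derivability of $\omega$ on $\mathring{B}$ yields derivability on $\mathring{\beta_j}$. This produces $\xi_j\in\mathring{\beta_j}$ with $\int_{\partial\beta_j}\omega=\mathrm{vol}(\beta_j)\,g(\xi_j)$, whence
\[
\int_{\partial B}\omega = \sum_{j=1}^N \mathrm{vol}(\beta_j)\,g(\xi_j),
\]
a Riemann sum for $g$ subordinate to $P$ with tags $\xi_j\in\beta_j$. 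Because $\inf_{\beta_j} g\le g(\xi_j)\le \sup_{\beta_j} g$, this sum is squeezed between the Darboux sums, so $s(g;P)\le \int_{\partial B}\omega \le S(g;P)$. The middle quantity is a single number independent of $P$; taking the supremum over $P$ on the left and the infimum on the right and using integrability of $g$ forces $\int_B g\le \int_{\partial B}\omega\le \int_B g$, i.e. $\int_{\partial B}\omega=\int_B\eD\omega$.

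I expect the only genuinely delicate point to be the bookkeeping of the first step — verifying that the boundary integrals over sub-blocks exist and that the interior faces cancel cleanly as chains — since everything downstream is the literal Darboux squeeze of Theorem \ref{thm: ftc}. The verification that the flux-continuity and derivability hypotheses descend to every sub-block is routine but should be stated explicitly, as it is what licenses the application of Theorem \ref{thm: mvt} cell by cell.
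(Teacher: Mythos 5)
Your proposal is correct and follows essentially the same route as the paper's own proof: partition $B$, use interior-face cancellation to write $\int_{\partial B}\omega$ as a sum of boundary integrals over sub-blocks, apply the mean value theorem for forms (Theorem \ref{thm: mvt}) on each sub-block to produce a tagged Riemann sum for $\eD\omega(\cdot)(e_1,\ldots,e_k)$, and conclude by the Darboux squeeze and the generality of the partition. Your explicit verifications that flux-continuity and derivability descend to sub-blocks, and that interior faces cancel as chains, are details the paper leaves implicit, but they are the same argument, just more carefully recorded.
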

\begin{proof}
Indeed, we start by taking a partition $P$ of $B$. Let $S(P)$ be the collection of sub-blocks determined by the partition $P$ of $B$. Thus, 
$$ \int_{\partial B} \omega = \sum\limits_{\beta\in S(P)} \int_{\partial\beta} \omega. $$ Now, by the previous version of the MVT  (Theorem \ref{thm: mvt}), in the interior of each $\beta\in S(P)$ there exists $\xi_{\beta}$ such that 
$$
\int_{\partial\beta} \omega = \eD\omega_{\xi_{\beta}}(e_1,\ldots,e_k)\text{vol}(\beta). 
$$ Therefore, we have 
$$ 
s(\eD\omega(\cdot{})(e_1,\ldots,e_k);P)\leq \int_{\partial B} \omega \leq S(\eD\omega(\cdot{})(e_1,\ldots,e_k);P). 
$$ The result follows by the generality of $P$ and the (Riemann) integrability of $D\omega$ on $B$.
\end{proof}

One can cast these last results in a more general setting, given the good behavior of $\eD$ under pullbacks (claimed without
proof in \cite[p.~286]{acker1}): 

\begin{lemma}[Naturality of $\eD$]\label{thm: pullback_lemma}
Let $\psi:V\rightarrow U$ be any map of class $C^1$ between open subsets $V\subseteq\mathbb{R}^p$ and $U\subseteq\mathbb{R}^n$, and let $\omega\in\Omega^{k-1}(U)$ be a $(k-1)$-form on $U$. If $\omega$ is derivable at $x=\psi(t)$, $t\in V$, then $\psi^{\ast}\omega$ is derivable at the point $t$ and $\eD(\psi^{\ast}\omega)(t)=\psi^{\ast}(\eD\omega)(t)$.
\end{lemma}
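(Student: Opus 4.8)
The plan is to exploit the functoriality of the pullback together with the chain rule, reducing the limit that defines $\eD(\psi^{\ast}\omega)(t)$ to the limit that defines $\eD\omega$ at $x=\psi(t)$, but evaluated along \emph{composed} parametrisations. The key observation is that every $C^1$ test map $\eta:A'\to V$ (with $A'\subseteq\mathbb{R}^k$ open) used to probe derivability of $\psi^{\ast}\omega$ at $t$ may be post-composed with $\psi$ to yield a $C^1$ test map $\varphi:=\psi\circ\eta:A'\to U$ probing derivability of $\omega$ at $x$. Since derivability presupposes flux-integrability, I would first dispose of that requirement: given any $C^1$ map $\eta:A'\to V$ with $\eta(q)=t$ and any $k$-block $B\subset A'$ containing $q$, set $\varphi:=\psi\circ\eta$, which is $C^1$ with $\varphi(q)=x$. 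By functoriality of the pullback, $\eta^{\ast}(\psi^{\ast}\omega)=(\psi\circ\eta)^{\ast}\omega=\varphi^{\ast}\omega$, and since $\omega$ is flux-integrable around $x$, the form $\varphi^{\ast}\omega$ is integrable over $\partial B$; hence so is $\eta^{\ast}(\psi^{\ast}\omega)$, proving that $\psi^{\ast}\omega$ is flux-integrable around $t$.

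Next I would identify the candidate exterior derivative and establish the limit. Fix directions $w_1,\dots,w_k\in\mathbb{R}^p$ and an arbitrary $C^1$ map $\eta:A'\to V$ with $\eta(q)=t$ and $\eta'(q)e_i=w_i$. With $\varphi=\psi\circ\eta$ as above, the chain rule gives $\varphi'(q)e_i=\psi'(t)\,\eta'(q)e_i=\psi'(t)w_i=:v_i$. Using the identity $\eta^{\ast}(\psi^{\ast}\omega)=\varphi^{\ast}\omega$, the normalised flux quotients coincide exactly:
\[
\frac{1}{\vol(B)}\int_{\partial B}\eta^{\ast}(\psi^{\ast}\omega)
=\frac{1}{\vol(B)}\int_{\partial B}\varphi^{\ast}\omega,
\]
for every block $B\subset A'$ containing $q$. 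Because the block domain $A'$, the basepoint $q$, and the ratios $L(B)/l(B)$ are all unchanged under post-composition with $\psi$, the shrinking constraints $L(B)<\delta$ and $L(B)/l(B)<K$ of Definition~\ref{dfn: flux} transfer verbatim from one side to the other. Consequently, applying the derivability of $\omega$ at $x$ to the parametrisation $\varphi$, the right-hand side converges to $\eD\omega_x(v_1,\dots,v_k)=\eD\omega_{\psi(t)}\bigl(\psi'(t)w_1,\dots,\psi'(t)w_k\bigr)$, which is by definition $\psi^{\ast}(\eD\omega)(t)(w_1,\dots,w_k)$.

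The crucial point in this step is that the limiting value $\eD\omega_x(v_1,\dots,v_k)$ depends only on $v_1,\dots,v_k$ (hence on $w_1,\dots,w_k$ and $\psi'(t)$) and not on the choice of $\eta$: this is guaranteed because $\eD\omega_x$ is a single $k$-form realising the limit for \emph{every} admissible $\varphi$. Thus all admissible $\eta$ yield the common value $\psi^{\ast}(\eD\omega)(t)(w_1,\dots,w_k)$, which establishes that $\psi^{\ast}\omega$ is derivable at $t$ with $\eD(\psi^{\ast}\omega)(t)=\psi^{\ast}(\eD\omega)(t)$.

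The argument is essentially bookkeeping, so I expect no substantial obstacle; the only subtlety worth flagging is the interplay of quantifiers in Definition~\ref{dfn: flux}. Derivability of $\omega$ at $x$ asserts the limit for \emph{every} $C^1$ map realising the prescribed tangent vectors, whereas we feed it only the restricted subfamily $\varphi=\psi\circ\eta$; this is legitimate precisely because ``for every $\varphi$'' subsumes these composites. I would therefore take care to verify that the uniformity parameters $\varepsilon$, $K$, $\delta$ can be chosen identically on both sides, which holds for the geometric reasons noted above, and that every admissible $\eta$ indeed lifts to an admissible $\varphi$ (so that no probing direction of $\psi^{\ast}\omega$ is left uncovered).
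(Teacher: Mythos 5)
Your proposal is correct and follows essentially the same route as the paper's proof: compose each test map $\eta$ with $\psi$, use $(\psi\circ\eta)^{\ast}\omega=\eta^{\ast}(\psi^{\ast}\omega)$, and invoke the derivability of $\omega$ at $x=\psi(t)$ applied to the composite. In fact, your write-up is somewhat more thorough than the paper's, since you explicitly verify flux-integrability of $\psi^{\ast}\omega$ around $t$ and address the quantifier structure of Definition~\ref{dfn: flux}, both of which the paper leaves implicit.
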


\begin{proof}
We want to prove that the  $(k-1)$-form  $\psi^{\ast}\omega$ defined on  $V\subset\mathbb{R}^p$ is derivable at  $t$. So let $\varphi: A\rightarrow V$ be any $C^1$ map, with $A$ an open subset of $\mathbb{R}^k$, such that $\varphi(s)=t$ for some $s\in A$. Then, $\psi\circ\varphi:A\rightarrow U$ is a $C^1$ map taking $s$ to $x$. Moreover, since $\omega$ is derivable at $x=\psi(t)$, we can compute $\eD\omega_{\psi(t)}((\psi\circ\varphi)'(s)e_1,\ldots,(\psi\circ\varphi)'(s)e_k)\in\mathbb{R}$. The standard property of the pull-back under composition gives
$(\psi\circ\varphi)^{\ast}\omega =\varphi^{\ast}(\psi^{\ast}\omega) $, hence

\begin{eqnarray*}
\eD\omega_{\psi(t)}((\psi\circ\varphi)'(s)e_1,\ldots,(\psi\circ\varphi)'(s)e_k)
&=& \lim_{\tilde{B}\to s} \frac{1}{\text{vol}(\tilde{B})}\int_{\partial\tilde{B}}(\psi\circ\varphi)^{\ast}\omega\\
&=& \lim_{\tilde{B}\to s} \frac{1}{\text{vol}(\tilde{B})}\int_{\partial\tilde{B}}\varphi^{\ast}(\psi^{\ast}\omega)
\end{eqnarray*} and the conclusion  follows straight from Definition \ref{dfn: flux}.
\end{proof}

The naturality of $\eD$ makes rigorous the following generalised version of Stokes' theorem, claimed in \cite[p.~281, Theorem A]{acker1} (see also \cite[p.7]{paper2} for a manifold version):
\begin{thm}[Stokes for $\eD$]
\label{thm: gstokes}
Let $U$ be an open set of $\mathbb{R}^n$, $\omega$ a $k$-form defined on $U$ and $c:B\subset\mathbb{R}^k\rightarrow U$ a $k$-singular block of class $C^1$ and suppose the following conditions hold:
\begin{enumerate}
\item[(1)] $c^{\ast}\omega$ is flux-continuous on $B$; 
\item[(2)] $\omega$ is derivable on $c(\mathring{B})$;
\item[(3)] $c^{\ast}(\eD\omega)$ is (Riemann) integrable on $B$.
\end{enumerate}
Then, $$ \int_{\partial c}\omega = \int_{c}\eD\omega.$$
\end{thm}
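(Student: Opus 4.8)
The plan is to reduce the statement to the Divergence theorem already established in Proposition~\ref{prop: stokes}, by pulling everything back to the domain block $B$ and then invoking the naturality of $\eD$ (Lemma~\ref{thm: pullback_lemma}). Set $\eta:=c^{*}\omega$; since hypothesis~(1) asks that $c^{*}\omega$ be flux-continuous on the $k$-block $B$, the form $\eta$ is a $(k-1)$-form on $B\subset\mathbb{R}^k$, and the two integrals in the conclusion are already encoded on $B$. Indeed, by Definition~\ref{dfn:integration} together with the remark following it,
\[
\int_{\partial c}\omega = \int_{\partial B} c^{*}\omega = \int_{\partial B}\eta,
\qquad
\int_{c}\eD\omega = \int_{B} c^{*}(\eD\omega).
\]
Thus it suffices to prove $\int_{\partial B}\eta = \int_{B} c^{*}(\eD\omega)$, and the entire argument amounts to checking that $\eta$ meets the three hypotheses of Proposition~\ref{prop: stokes} and that its flux exterior derivative is precisely $c^{*}(\eD\omega)$.

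Next I would transfer derivability from $\omega$ to $\eta$. Since $c$ is of class $C^1$ on the \emph{closed} block $B$, it extends to a genuine $C^1$ map $\tilde{c}\colon V\to U$ on some open neighbourhood $V\supseteq B$; because derivability at a point in the sense of Definition~\ref{dfn: flux} depends only on the germ of the form near that point, working with $\tilde{c}$ and restricting to $B$ afterwards changes nothing. For each interior point $t\in\mathring{B}$ we have $\tilde{c}(t)=c(t)\in c(\mathring{B})$, where $\omega$ is derivable by hypothesis~(2), so Lemma~\ref{thm: pullback_lemma} applies and yields that $\eta=\tilde{c}^{*}\omega$ is derivable at $t$ with
\[
\eD(c^{*}\omega)(t) = \eD(\tilde{c}^{*}\omega)(t) = \tilde{c}^{*}(\eD\omega)(t) = c^{*}(\eD\omega)(t).
\]
Hence $\eta$ is derivable on all of $\mathring{B}$ and $\eD\eta = c^{*}(\eD\omega)$ there.

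Finally I would verify the hypotheses of Proposition~\ref{prop: stokes} for $\eta$ and conclude. Flux-continuity on $B$ is exactly hypothesis~(1); derivability on $\mathring{B}$ was just established; and Riemann integrability of $\eD\eta$ on $B$ follows from hypothesis~(3), since $\eD\eta=c^{*}(\eD\omega)$ on $\mathring{B}$ and the two integrands can disagree at most on the measure-zero set $\partial B$, which is irrelevant for Riemann integrability. Proposition~\ref{prop: stokes} then gives $\int_{\partial B}\eta = \int_{B}\eD\eta = \int_{B} c^{*}(\eD\omega)$, which is the claimed identity. The argument is therefore a formal consequence of the machinery already in place; the only delicate points are the passage from the closed block $B$ to an open domain where Lemma~\ref{thm: pullback_lemma} literally applies (handled by the $C^1$-extension together with the locality of derivability) and the bookkeeping that $c^{*}(\eD\omega)$ need only be controlled on $\mathring{B}$. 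I do not expect a substantive obstacle beyond this.
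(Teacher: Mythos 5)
Your proposal is correct and follows essentially the same route as the paper: both reduce the statement to Proposition~\ref{prop: stokes} by applying the naturality Lemma~\ref{thm: pullback_lemma} to get derivability of $c^{\ast}\omega$ on $\mathring{B}$ with $\eD(c^{\ast}\omega)=c^{\ast}(\eD\omega)$, check flux-continuity and integrability (discarding the measure-zero set $\partial B$), and translate the integrals via $\int_{\partial c}\omega=\int_{\partial B}c^{\ast}\omega$ and $\int_{c}\eD\omega=\int_{B}c^{\ast}(\eD\omega)$. The only cosmetic difference is that the paper applies the lemma directly to the restriction $\psi=c|_{\mathring{B}}$, which is already a $C^1$ map between open sets, so your detour through a $C^1$ extension of $c$ to a neighbourhood of $B$ is unnecessary (though harmless).
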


\begin{proof}
From (2) and the fact $c$ is of class $C^1$, we can apply Lemma \ref{thm: pullback_lemma} to $\psi=c|_{\mathring{B}}$ and conclude that $c^{\ast}\omega$ is derivable and $\eD(c^{\ast}\omega)=c^{\ast}(\eD\omega)$ on $\mathring{B}$. In particular, in view of (3), it follows that $\eD(c^{\ast}\omega)$ is integrable on $B$. Moreover, by (1), $c^{\ast}\omega$ is flux-continuous on $B$, so we are in the conditions of Theorem \ref{prop: stokes}: $$ \int_{\partial B} c^{\ast}\omega = \int_{B} \eD(c^{\ast}\omega). $$ On the other hand, as $\partial B$ has measure zero in $\mathbb{R}^k$ and $B=\mathring{B}\cup\partial B$, we have $$\int_{B} \eD(c^{\ast}\omega) =  \int_{B}c^{\ast}(\eD\omega) = \int_{c} \eD\omega, $$ and (again) by the fact that $c$ is of class $C^1$, 
$$ \int_{\partial B} c^{\ast}\omega = \int_{\partial c} \omega $$ which concludes the proof.    
\end{proof}




\begin{thm}[$\eD^2=0$]
\label{thm: D^2=0}
Let $\omega$ be a $(k-1)$-form continuous on some open subset $U$ of $\R^n$. Suppose also $\omega$ is derivable on $U$ and that $\eD\omega$ is integrable in every $k$-chain of class $C^1$ on $U$. Then, $\eD\omega$ is derivable on $U$ and 
$$
\eD^2\omega=0.
$$
\end{thm}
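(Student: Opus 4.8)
The plan is to unwind the definition of $\eD^2\omega$ through Definition~\ref{dfn: flux} and reduce the boundary integral of the $k$-form $\eD\omega$ to a boundary integral of $\omega$ over an iterated boundary $\partial^2$, which vanishes identically. Concretely, $\eD^2\omega$ is a $(k+1)$-form, and to evaluate $\eD(\eD\omega)_x(v_1,\ldots,v_{k+1})$ I would fix $x\in U$, vectors $v_1,\ldots,v_{k+1}\in\R^n$, and an arbitrary $C^1$ map $\varphi:A\to U$ from an open $A\subseteq\R^{k+1}$ with $\varphi(p)=x$ and $\varphi'(p)e_i=v_i$. For a $(k+1)$-block $B\subset A$ containing $p$, write $c\coloneqq\varphi|_B$ for the induced singular $(k+1)$-block of class $C^1$, whose faces $c_{ij}=\varphi\circ F_{ij}$ are singular $k$-blocks as in Definition~\ref{dfn:boundary}. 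Using the pullback identity for boundaries (the remark following Definition~\ref{dfn:integration}), one has $\int_{\partial B}\varphi^{\ast}(\eD\omega)=\int_{\partial c}\eD\omega=\sum_{i,j}(-1)^{i+j}\int_{c_{ij}}\eD\omega$, so the whole computation reduces to understanding each face integral $\int_{c_{ij}}\eD\omega$.

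The central step is to apply the generalised Stokes theorem (Theorem~\ref{thm: gstokes}) to the $(k-1)$-form $\omega$ on each singular $k$-block $c_{ij}$, obtaining $\int_{c_{ij}}\eD\omega=\int_{\partial c_{ij}}\omega$. I would verify its three hypotheses face by face. Writing $\Delta_{ij}$ for the $k$-block domain of $c_{ij}$: (1) $c_{ij}^{\ast}\omega$ is flux-continuous, because $\omega$ is continuous and $c_{ij}$ is $C^1$, so the pullback has continuous coefficients, and ordinary continuity already implies flux-continuity (the mechanism highlighted after Lemma~\ref{lemma: trisec for k-blocks}); (2) $\omega$ is derivable on $c_{ij}(\mathring{\Delta}_{ij})\subseteq U$, since it is assumed derivable on all of $U$; (3) $c_{ij}^{\ast}(\eD\omega)$ is integrable, because $c_{ij}$ is a $C^1$ $k$-chain and $\eD\omega$ is assumed integrable on every such chain. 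I would also note that hypothesis (3) certifies, before anything else, that $\eD\omega$ is flux-integrable around $x$ in the sense of Definition~\ref{dfn: flux-integrable}, so that the very notion of "$\eD\omega$ being derivable at $x$" is meaningful.

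Substituting the Stokes identity back, I get
\[
\int_{\partial B}\varphi^{\ast}(\eD\omega)=\sum_{i,j}(-1)^{i+j}\int_{\partial c_{ij}}\omega=\int_{\partial(\partial c)}\omega=0,
\]
where the last equality is precisely the fact that $\partial^2 c=0$ for any $C^1$ chain. Hence $\tfrac{1}{\vol(B)}\int_{\partial B}\varphi^{\ast}(\eD\omega)=0$ for every admissible block $B$, so the defining limit in Definition~\ref{dfn: flux} exists trivially and equals $0$, independently of the choice of $\varphi$ and of $v_1,\ldots,v_{k+1}$. This shows that $\eD\omega$ is derivable at $x$ with $\eD(\eD\omega)_x=0$, and since $x\in U$ was arbitrary, $\eD^2\omega\equiv 0$ on $U$.

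The main obstacle is bookkeeping rather than conceptual. The conceptual heart — turning $\sum(-1)^{i+j}\int_{\partial c_{ij}}\omega$ into $\int_{\partial^2 c}\omega=0$ — is immediate once Stokes is available on each face. The delicate part is checking that the restriction-to-faces operation transports the \emph{global} assumptions on $\omega$ into exactly the hypotheses (1)--(3) of Theorem~\ref{thm: gstokes} on each $c_{ij}$: in particular, that continuity of $\omega$ upgrades to flux-continuity of each $c_{ij}^{\ast}\omega$, and that the assumed chain-wise integrability of $\eD\omega$ simultaneously supplies both the flux-integrability needed to state derivability of $\eD\omega$ and hypothesis (3) required to run Stokes on every face.
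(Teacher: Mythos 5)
Your proposal is correct and takes essentially the same route as the paper's proof: apply the generalised Stokes theorem (Theorem~\ref{thm: gstokes}) to $\omega$ over the faces of the boundary of a $(k+1)$-chain, use $\partial^2=0$ to kill the resulting integral, and conclude $\eD(\eD\omega)_x=0$ via Definition~\ref{dfn: flux}. The only difference is that you explicitly unwind the limit definition on blocks $B\subset A$ and verify the three hypotheses of Theorem~\ref{thm: gstokes} face by face (including the flux-integrability of $\eD\omega$), details the paper's terser proof leaves implicit.
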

\begin{proof}
Let $c$ be any $(k+1)$-chain of class $C^1$ on $U$. Then, the hypotheses ensure we can apply Theorem \ref{thm: gstokes} to $\omega$ using the $k$-chain $\partial c$:
$$
\int_{\partial c}\eD\omega=\int_{\partial(\partial c)}\omega =0.
$$
since  $\partial^2=0$ (cf. p.\pageref{page
boundary2=0}). In the terms of Definition
\ref{dfn: flux}, generality of $c$ implies $\eD(\eD\omega)_x=0$ at any given
$x\in U$.                         \end{proof}
In particular, Theorem \ref{thm: D^2=0}
points to a natural cohomology theory associated to $\eD$; it is indeed an
interesting question whether this extension contains any information beyond standard de Rham cohomology, since a priori one has
additional closed forms (which could also be compensated by the new exact forms as well). Here one has in mind not only open subsets in $\mathbb{R}^n$ but also the natural extension of this theory to smooth $n$-manifolds. Given the introductory scope of this article, we
leave it as a meditation topic for the motivated reader.                
\subsection{Consistency with the Classical Exterior Derivative}
\label{subsec: compatibility}

Finally, we conclude that $\eD$ indeed extends $\ed$ in the usual differentiable
cases. The rest of this Subsection is devoted to the proof of  the following
result:                 \begin{thm}[$\eD=\ed$ in the differentiable case]
\label{thm: compatibility}
If $\omega$ is a $(k-1)$-form on $\mathbb{R}^n$ defined on some open subset $U$ such that $\omega$ is differentiable at a point $x_0\in U$, then $\omega$ is derivable at this point (in the sense of Definition \ref{dfn: flux}) and $$\eD\omega_{x_0} = \ed\omega_{x_0}.$$ 
\end{thm}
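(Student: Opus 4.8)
The plan is to fix an arbitrary $C^1$ map $\varphi:A\to U$ with $\varphi(p)=x_0$ and $\varphi'(p)e_i=v_i$ ($i=1,\ldots,k$), and to show that the normalised flux $\frac{1}{\mathrm{vol}(B)}\int_{\partial B}\varphi^{\ast}\omega$ converges, as $B\to p$ under the aspect-ratio constraint of Definition \ref{dfn: flux}, to $\ed\omega_{x_0}(v_1,\ldots,v_k)$, with the same value for every admissible $\varphi$. Since $(v_1,\ldots,v_k)\mapsto\ed\omega_{x_0}(v_1,\ldots,v_k)$ is a genuine $k$-form, this simultaneously yields derivability at $x_0$ and the identity $\eD\omega_{x_0}=\ed\omega_{x_0}$. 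I take for granted that $\omega$ is flux-integrable around $x_0$ (Definition \ref{dfn: flux-integrable}; this is presupposed by derivability and holds, e.g., under the hypotheses of Remark \ref{rem:flux-null}), so that all the boundary integrals below exist. The key move is to split $\omega$, on a neighbourhood of $x_0$, as $\omega=\tilde\eta+\eta_2$, where $\tilde\eta(x):=\omega(x_0)+\omega'(x_0)(x-x_0)$ is the first-order Taylor polynomial of $\omega$ at $x_0$ and $\eta_2:=\omega-\tilde\eta$ is the remainder. By Definition \ref{dfn: dif_standard}, $\tilde\eta$ is a $C^\infty$ form whose classical exterior derivative is the constant $k$-form $\ed\omega_{x_0}$; and differentiability at $x_0$ furnishes an increasing function $\rho$ with $\rho(r)\to 0$ as $r\to 0^{+}$ and $\|\eta_2(x)\|\le\rho(|x-x_0|)\,|x-x_0|$.

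For the Taylor part I would first establish an auxiliary ``Stokes for $C^1$ maps'' identity: for any $C^\infty$ form $\zeta$ and any $C^1$ map $\varphi$, one has $\int_{\partial B}\varphi^{\ast}\zeta=\int_B\varphi^{\ast}(\ed\zeta)$. This is proved by approximating $\varphi$ uniformly in $C^1$ on a neighbourhood of $B$ by smooth maps $\varphi_m$, applying the classical Stokes theorem (valid since $\varphi_m^{\ast}\zeta$ is now $C^1$), and passing to the limit: the coefficients of $\varphi_m^{\ast}\zeta$ and $\varphi_m^{\ast}(\ed\zeta)$ are the coefficients of $\zeta$ (resp.\ $\ed\zeta$) composed with $\varphi_m$ times $(k-1)$-minors (resp.\ $k$-minors) of $\varphi_m'$, which converge uniformly on the relevant compact sets. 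Applying this with $\zeta=\tilde\eta$ gives $\int_{\partial B}\varphi^{\ast}\tilde\eta=\int_B\varphi^{\ast}(\ed\omega_{x_0})=\int_B g$, where $g(y):=(\ed\omega_{x_0})(\varphi'(y)e_1,\ldots,\varphi'(y)e_k)$ is continuous. Since $p\in B$ and $\mathrm{diam}(B)\to 0$, the average $\frac{1}{\mathrm{vol}(B)}\int_B g$ converges to $g(p)=\ed\omega_{x_0}(v_1,\ldots,v_k)$.

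For the remainder I would estimate directly, using the aspect-ratio constraint. Let $M:=\sup\|\varphi'\|$ over a fixed compact neighbourhood of $p$, so that $|\varphi(y)-x_0|=|\varphi(y)-\varphi(p)|\le M\,\mathrm{diam}(B)$ for $y\in B$. The operator-norm bound \eqref{eq: des1} then bounds the integrand of each face integral by $\|\eta_2(\varphi(y))\|\,M^{k-1}\le\rho(M\,\mathrm{diam}(B))\,M^{k}\,\mathrm{diam}(B)$. Summing over the $2k$ faces (the face orthogonal to $e_i$ having $(k-1)$-volume $\mathrm{vol}(B)/s_i$, with $s_i$ the side lengths) and dividing by $\mathrm{vol}(B)$ brings out the factor $\frac{\mathrm{diam}(B)\,\mathrm{area}(\partial B)}{\mathrm{vol}(B)}=2\,\mathrm{diam}(B)\sum_{i}s_i^{-1}$. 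Since $\mathrm{diam}(B)\le\sqrt{k}\,L(B)$ and $\sum_i s_i^{-1}\le k/l(B)$, this factor is at most $2k^{3/2}\,L(B)/l(B)<2k^{3/2}K$, precisely because of the condition $L(B)/l(B)<K$. Hence $\frac{1}{\mathrm{vol}(B)}\bigl|\int_{\partial B}\varphi^{\ast}\eta_2\bigr|\le 2k^{3/2}K\,M^{k}\,\rho(M\,\mathrm{diam}(B))\to 0$ as $B\to p$.

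Adding the two contributions gives $\frac{1}{\mathrm{vol}(B)}\int_{\partial B}\varphi^{\ast}\omega\to\ed\omega_{x_0}(v_1,\ldots,v_k)$ for every admissible $\varphi$, which is exactly derivability at $x_0$ with $\eD\omega_{x_0}=\ed\omega_{x_0}$; the matching $\varepsilon$--$\delta$ with the quantifier on $K$ is obtained by choosing $\delta$ small enough that both the remainder bound and the deviation of the average of $g$ from $g(p)$ are below $\varepsilon/2$. I expect the main obstacle to be the Taylor-part identity: because $\varphi$ is only $C^1$, the pullback $\varphi^{\ast}\tilde\eta$ need not be $C^1$, so the classical Stokes theorem does not apply directly, and the smoothing argument — together with the verification that the pullbacks and their classical differentials converge uniformly — is the technical heart of the proof. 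The remainder estimate is then routine once one recognises that the ``strange'' aspect-ratio bound $L(B)/l(B)<K$ is exactly what keeps $\mathrm{diam}(B)\,\mathrm{area}(\partial B)/\mathrm{vol}(B)$ bounded, as anticipated in the remarks following Definition \ref{dfn: flux}.
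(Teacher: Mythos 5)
Your proposal is correct and follows essentially the same route as the paper's proof: the identical Taylor decomposition $\omega=\omega_0+\omega_1$ at $x_0$, with the affine part handled by the classical Stokes theorem plus naturality of $\ed$ (so the normalised flux converges to $\ed\omega_{x_0}(v_1,\ldots,v_k)$), and the remainder killed by a direct estimate in which the aspect-ratio condition $L(B)/l(B)<K$ is exactly what keeps $\mathrm{diam}(B)\,\mathrm{area}(\partial B)/\mathrm{vol}(B)$ bounded. The only notable differences are that the paper simply cites the classical Stokes theorem for $C^1$ singular blocks where you propose to re-derive it by smoothing $\varphi$ (a defensible extra precaution, since naturality of $\ed$ classically needs $C^2$ maps), and that your boundary-area bookkeeping handles $k=1$ uniformly, whereas the paper treats that case separately via Lemma \ref{lemma: 0-forms dim 1}.
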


\textbf{Proof.} By the differentiability of $\omega$ at the point $x_0$, we may write
\begin{equation} \label{eq: eq1}
\omega(x) = \omega(x_0) + \omega'(x_0)(x-x_0) + \rho(x)|x-x_0|,
\end{equation} 
where $\displaystyle\lim_{x\rightarrow x_0}\rho(x)= 0$.
Define $\omega_0,\omega_1\in\Omega^{k-1}$ by 
$$
\omega_0(x)\coloneqq \omega(x_0) + \omega'(x_0)(x-x_0), \quad \omega_1(x)\coloneqq \rho(x)|x-x_0|.
$$
Certainly  $\omega_0$ is of class $C^1$, and $\omega_0'(x) = \omega'(x_0)$, $\forall x\in U$. Indeed, by the linearity of $\omega'(x_0)$, 
\begin{eqnarray*}
\left\Vert\omega_0(x+h)-\omega_0(x)-\omega'(x_0)h\right\Vert &=& \left\Vert\omega(x_0)+\omega'(x_0)(x+h-x_0) - \omega(x_0) - \omega'(x_0)(x-x_0) - \omega'(x_0)h\right\Vert\\
&=& \left\Vert(\omega'(x_0)(x-x_0) + \omega'(x_0)h) - \omega'(x_0)(x-x_0) - \omega'(x_0)h\right\Vert\\
&=& 0.\\
\end{eqnarray*}

Now, let $v_1,\ldots,v_k\in\mathbb{R}^n$ and $\varphi:A\to U$ be a $C^1$ with $\varphi(t_0)=x_0$ and $\varphi'(t_0)e_i=v_i$, for each $i=1,\ldots,k$. We want to prove that 
\begin{equation} \label{eq: eq_provar}
\lim_{B\to t_0} \frac{1}{\text{vol}(B)}\int_{\partial B}\varphi^{\ast}\omega = \sum_{i=1}^k (-1)^{i+1}\omega'(x_0)v_i(v_1,\ldots,\widehat{v_i},\ldots,v_k). 
\end{equation}

By (\ref{eq: eq1}), $\omega = \omega_0 + \omega_1$. On the other hand, using the classical Stokes theorem for $\omega_0$ and
the naturality of $\ed$: 
\begin{eqnarray*}
\lim_{B\to t_0} \frac{1}{\text{vol}(B)}\int_{\partial B}\varphi^{\ast}\omega_0 &=& \lim_{B\to t_0} \frac{1}{\text{vol}(B)}\int_{B}\varphi^{\ast}(\ed\omega_0)  \\
&=& \varphi^{\ast}(\ed\omega_0)|_{t_0} \\
&=& \sum_{i=1}^k (-1)^{i+1}\omega'(x_0)v_i(v_1,\ldots,\widehat{v_i},\ldots,v_k), 
\end{eqnarray*} since $\omega_0'(x)=\omega'(x_0)$. The last equation is exactly  right-hand side of (\ref{eq: eq_provar}). So we are left to prove that 
$$
\lim_{B\to t_0} \frac{1}{\text{vol}(B)}\int_{\partial B}\varphi^{\ast}\omega_1 = 0,
$$ i.e. that given $\varepsilon>0$ and $K>1$, there exists $\delta>0$ such that, for every $k$-block $B$ in $A$ satisfying
\begin{equation} \label{eq: block}
L(B)<\delta, \quad t_0\in B, \quad\frac{L(B)}{l(B)}<K,
\end{equation} one has
$$
\left|\frac{1}{\text{vol}(B)}\int_{\partial B}\varphi^{\ast}\omega_1\right|<\varepsilon.
$$

Before we proceed, observe that if $B$ is a $k$-block satisfying the condition $L(B)/l(B)<K$, then by the fact that $\text{vol}(B)\geq l(B)^k$ (recall $l(B)$ is the smaller side of $B$), we obtain 
\begin{equation} \label{eq: eq_bound1}
\frac{1}{\text{vol}(B)}<\frac{K^k}{L(B)^k}.
\end{equation}
The integral we want to control (for sufficiently small blocks $B$) has an integrand of the form 
$$
(\varphi^{\ast}\omega_1)_{t}(v_1,\ldots,v_{k-1}) = \rho(\varphi(t))(\varphi'(t)v_1,\ldots,\varphi'(t)v_{k-1})|\varphi(t)-\varphi(t_0)|,
$$ so we are lead to bound the norm of $\varphi'$, in some neighbourhood of $t_0$, by some constant $M>0$. Once we do that, we can use inequality (\ref{eq: des1}) and the classical mean value inequality to bound the last term by $\left\Vert\rho(\varphi(t))\right\Vert M^{k-1}|v_1|\ldots|v_{k-1}|M|t-t_0|$ (for $t$ sufficiently near $t_0$). Thus, with \textit{normalised} vectors $v_1,\ldots,v_{k-1}$, and using the elementary fact\footnote{Just think in the case of a $k$-cube with side $L(B)$} that $\text{diam}(B)\leq L(B)\sqrt{k}$, we will conclude that, for $t$ \textit{sufficiently near} $t_0$,
\begin{equation}\label{eq: eq_bound2}
|\rho(\varphi(t))(\varphi'(t)v_1,\ldots,\varphi'(t)v_{k-1})||\varphi(t)-\varphi(t_0)|\leq \left\Vert\rho(\varphi(t))\right\Vert M^kL(B)\sqrt{k}.
\end{equation}

Let $\varepsilon>0$ and $K>1$ be given. We want to construct the neighbourhood of $t_0$ where not only does the above equation hold, but also the term $\left\Vert\rho(\varphi(t))\right\Vert$ is bounded by a suitable constant. First, as $\varphi'(t_0)$ is a linear transformation between finite-dimensional vector spaces, there exists $M>0$ such that $\left\Vert\varphi'(t_0)\right\Vert=\text{sup}\{|\varphi'(t_0)v|:|v|=1\}<M$. Now, $\varphi$ is of class $C^1$ (i.e. $\varphi'$ is continuous), so we can find an open neighbourhood $A_0$ of $t_0$ such that $\left\Vert\varphi'|_{A_0}\right\Vert<M$,
for example  
$$
A_0\coloneqq (\left\Vert\cdot{}\right\Vert\circ\varphi')^{-1}(]-1,M[).
$$ 
On the other hand, since $\rho(x)$ gets small when $x\rightarrow x_0$, there exists a neighbourhood $V$ of $x_0 $ in  $ U$ such that
$$
x\in V \Rightarrow \left\Vert\rho(x)\right\Vert<\frac{\varepsilon}{2k\sqrt{k}M^kK^k}
$$ Thus, by continuity of $\varphi$ and the fact that $\rho(x_0) = 0$, $A_1\coloneqq\varphi^{-1}(V)$ is an open neighbourhood of $t_0$ in $A$. 

Now choose $\delta_0>0$ such that the Euclidean ball of center $t_0$ and radius $\delta_0$ satisfies  $B(t_0;\delta_0)\subseteq A_0\cap A_1$ and set $\delta\coloneqq\delta_0/{\sqrt{k}}$. By construction, if $B$ is a $k$-block in $A$ such that $t_0\in B$ and $L(B)<\delta$, then $B\subseteq A_0\cap A_1$, and
\begin{equation} \label{eq: eq_bound3}
t\in B\Rightarrow \left\Vert\varphi'(t)\right\Vert<M,\quad \left\Vert\rho(\varphi(t))\right\Vert<\frac{\varepsilon}{2k\sqrt{k}M^kK^k}.
\end{equation}
Writing $B_j$ for the faces of $\partial B$, and $\{v_{j1},\ldots,v_{jk-1}\}$ for normalised generators of $B_j$, ${j=1,\ldots,2k}$, the inequalities (\ref{eq: eq_bound2}) and (\ref{eq: eq_bound3}) together imply
\begin{equation} \label{eq: eq_bound4}
|\rho(\varphi(t))(\varphi'(t)v_{j1},\ldots,\varphi'(t)v_{jk-1})||\varphi(t)-\varphi(t_0)|\leq\frac{\varepsilon.L(B)}{2kK^k},\quad\forall t\in B.
\end{equation}

Therefore, if $B$ satisfies (\ref{eq: block}), we have

\begin{eqnarray*}
\left|\int_{B_j}\varphi^{\ast}\omega_1\right| &=& \left|\int_{B_j}\rho(\varphi(t))(\varphi'(t)v_{j1},\ldots,\varphi'(t)v_{jk-1})|\varphi(t)-\varphi(t_0)|dt\right| \\
&\leq& \frac{\varepsilon.L(B)}{2kK^k}.L(B)^{k-1},
\end{eqnarray*} using (\ref{eq: eq_bound4}) and the fact that $\text{vol}(B_j)\leq L(B)^{k-1}$. Combining this estimate
with the bound (\ref{eq: eq_bound1}) on $\vol(B)$, we obtain

\begin{eqnarray*}
\left|\frac{1}{\text{vol}(B)}\int_{\partial B}\varphi^{\ast}\omega_1\right| 
&<& 
\frac{K^k}{L(B)^k}\sum_{j=1}^{2k}\left|\int_{B_j}\varphi^{\ast}\omega_1\right|\\
&<& \frac{K^k}{L(B)^k}.(2k)\frac{\varepsilon.L(B)}{2kK^k}.L(B)^{k-1}\\
&=& \varepsilon
\end{eqnarray*}
provided $k>1$ and so $|\int_{B_j}|\leq\int_{B_j}|\cdot{}|$.

The  $k=1$ case is just Lemma \ref{lemma: 0-forms dim 1}, presented in the
Introduction, so for clarity, let us write the proof in its original notation.
We prove ($\Rightarrow$), as the implication ($\Leftarrow$) is clear. By the differentiability of $f$ at $x$ we may write, for every $y\in U$,
\begin{equation}
\label{eq: first order}
f(y)=f(x)+f'(x)(y-x)+|y-x|\rho(y),\quad\text{with}\quad\lim_{y\to x}\rho(y)=0.
\end{equation}
So  $f=f_0+f_1$, with $f_0(y)\coloneqq f(x)+f'(x)(y-x)$ and $f_1(y)\coloneqq |y-x|\rho(y)$, for each $y\in U$. Now, observe that $f_0$ is infinitely differentiable on $U$ with constant derivative $f_0'(y)\equiv f'(x)$ for each $y\in U$. Thus, for every $[a,b]\subseteq U$, we can apply the second fundamental theorem of calculus to $f$:
$$
\frac{1}{b-a}\int_{\partial[a,b]}f_0 = \frac{1}{b-a}\int_{[a,b]}f_0' = f'(x).
$$ 
We are left to show that 
$$
\lim_{[a,b]\to x}\frac{1}{\text{vol}[a,b]}\int_{\partial [a,b]}f_1 = 0.
$$
Let $\varepsilon>0$ be given. By (\ref{eq: first order}), there exists some $\delta>0$ such that $|\rho(y)|<\varepsilon/2$ for every $y\in U$ such that $|y-x|<\delta$. Then, if $[a,b]\subseteq U$ is such that $x\in[a,b]$ and $b-a<\delta$, we can write
\begin{eqnarray*}
\left|\frac{1}{b-a}(f_1(b)-f_1(a))\right| &=& \frac{1}{b-a}\big\lvert{|b-x|\rho(b) - |a-x|\rho(a)}\big\rvert\\
&\leq& \frac{1}{b-a}\big((b-a)\lvert\rho(b)\rvert+(b-a)\lvert\rho(a)\rvert\big)\\
&=& \lvert\rho(b)\rvert+\lvert\rho(a)\rvert\\
&<& \varepsilon.
\end{eqnarray*} 
\section{Stokes' theorem}
\label{sec:Stokes_Theorem}
Perhaps the most important corollary of the last compatibility result (Theorem \ref{thm: compatibility}) with the $\eD$ version of Stokes' theorem (Theorem \ref{thm: gstokes}) is the following version of Stokes' theorem for $\ed$, which does not require $C^1$ regularity on the integrand:

\newtheorem*{thm: Stokes without C1}
{Theorem \ref{thm: Stokes without C1}}

\begin{thm: Stokes without C1}[Differentiable version of Stokes' theorem]\label{thm: diff_stokes}
Let $\omega\in\Omega^k(U)$ be a $k$-form on some open subset $U$ of $\mathbb{R}^n$, and let $c:B\rightarrow U$ be a $(k+1)$-singular block of class $C^1$ in $U$. If $\omega$ is continuous on $c(B)$ and differentiable on $c(\mathring{B})$, then
$$ \int_{\partial c} \omega = \int_{c} \ed\omega, $$ when the right-hand side integral exists.
More generally, let $c=n_1c_1+\ldots+n_lc_l$ is a $(k+1)$-chain of class $C^1$, with $c_i:B_i\rightarrow U$ ($i=1,\ldots,l$). If $\omega$ is continuous on $c(B_i)$ and differentiable on $c(\mathring{B_i})$, for each $i=1,\ldots,l$, then the same conclusion holds.
\end{thm: Stokes without C1}

We can easily derive some other well-known results. In what follows, by a (closed) rectangle in $\mathbb{R}^2$ we mean a possibly rotated and translated $2$-block, i.e. the image of a $2$-singular block $c:B\to\mathbb{R}^2$ of the form $c=E\circ i$, where $i:B\hookrightarrow\mathbb{R}^2$ is the inclusion of the $2$-block $B$ and $E:\mathbb{R}^2\to\mathbb{R}^2$ is a rigid motion in $\mathbb{R}^2$ (i.e. the composition of a translation with a rotation on the plane). In particular, we obtain (see \cite[p.~215]{paper}) the following version of Green's theorem (on rectangles):

\begin{thm}[Green's Theorem without $C^1$ assumption]\label{thm: green}
Let $R$ be a (closed) rectangle in $\mathbb{R}^2$ and $P,Q:R\rightarrow\mathbb{R}$ continuous functions on $R$ that are differentiable in the interior of $R$. Then,
$$ \int_{\partial R}{P\ed x + Q\ed y} = \int_R \left(\frac{\partial Q}{\partial x}-\frac{\partial P}{\partial y}\right)\ed x\ed y,$$ when the right-hand side integral exists. 
\end{thm}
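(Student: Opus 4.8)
The plan is to obtain Green's theorem as a direct corollary of the differentiable version of Stokes' theorem (Theorem~\ref{thm: diff_stokes}), applied to the $1$-form $\omega = P\,\ed x + Q\,\ed y$ on $\mathbb{R}^2$, together with the elementary computation of its exterior derivative. Here the degree is $k=1$, so that the relevant singular block is a $(k+1)=2$-block, exactly as in the definition of a rectangle: $R = c(B)$ with $c = E\circ i$, where $i:B\hookrightarrow\mathbb{R}^2$ is the inclusion of a $2$-block $B$ and $E$ is an (orientation-preserving) rigid motion.

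First I would dispatch the minor technical point that Theorem~\ref{thm: diff_stokes} asks for $\omega\in\Omega^k(U)$ with $U$ open, whereas $P,Q$ are only prescribed on the compact set $R$. Since $R$ is compact, a Tietze extension produces continuous functions on all of $\mathbb{R}^2$ agreeing with $P,Q$ on $R$; as differentiability at a point of $\mathring R$ depends only on values in a neighbourhood contained in $R$, the extended coefficients remain differentiable on $\mathring R$ (in fact any extension works, since the two sides only see values on $R$). Thus $\omega = P\,\ed x + Q\,\ed y$ may be regarded as an element of $\Omega^1(\mathbb{R}^2)$, continuous on $c(B)=R$ and differentiable on $c(\mathring B)=\mathring R$ (using that $E$ is a homeomorphism, so $E(\mathring B)=\mathring R$). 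The hypotheses of Theorem~\ref{thm: diff_stokes} are then met, the $C^1$-regularity of $c = E\circ i$ being immediate.

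Next I would carry out the standard computation $\ed\omega = \bigl(\tfrac{\partial Q}{\partial x} - \tfrac{\partial P}{\partial y}\bigr)\,\ed x\wedge\ed y$, valid at every point of $\mathring R$ where $\omega$ is differentiable (this is precisely Definition~\ref{dfn: dif_standard} for a $1$-form in two variables). Theorem~\ref{thm: diff_stokes} then yields $\int_{\partial c}\omega = \int_c \ed\omega$ whenever the right-hand integral exists, and it remains to identify each side with the corresponding object in the statement. By Definition~\ref{dfn:integration} and naturality of the pullback, $\int_c\ed\omega = \int_B c^{\ast}(\ed\omega)$; since $E$ preserves the area form $\ed x\wedge\ed y$ and is orientation-preserving, this equals $\int_R\bigl(\tfrac{\partial Q}{\partial x} - \tfrac{\partial P}{\partial y}\bigr)\,\ed x\,\ed y$, the desired right-hand side.

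The step requiring the most care --- and the main obstacle --- is matching $\int_{\partial c}\omega$ with the classical line integral $\int_{\partial R} P\,\ed x + Q\,\ed y$. Here I would unwind Definition~\ref{dfn:boundary}: the boundary $\partial c = \sum_{i,j}(-1)^{i+j}c_{ij}$ assembles the four edges of the (rotated) rectangle with the signs dictated by the induced orientation, and one must verify that this signed sum reproduces the standard traversal of $\partial R$. Because $E$ is a proper rigid motion, it carries the boundary orientation of $B$ to that of $R$, so the two conventions agree and no spurious sign appears. With both sides thus identified, the equality furnished by Theorem~\ref{thm: diff_stokes} is exactly the asserted Green identity, under the same hypothesis that the right-hand integral exists.
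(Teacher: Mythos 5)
Your proposal is correct and follows exactly the route the paper intends: the paper states Theorem~\ref{thm: green} as an immediate corollary of Theorem~\ref{thm: Stokes without C1} applied to $\omega = P\,\ed x + Q\,\ed y$ and the singular $2$-block $c = E\circ i$, which is precisely your argument, with the extension of $P,Q$ to an open neighbourhood, the orientation bookkeeping for $\partial c$, and the invariance of $\ed x\wedge \ed y$ under the proper rigid motion $E$ spelled out in detail that the paper leaves implicit.
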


\section{Numerical Exterior Derivative}
\label{sec:numerical-exterior-derivative}
Building on the exposition in Section~\ref{sec: Exterior derivative}, we propose a numerical method for computing the components of the exterior derivative $\eD \omega$ of a given $(k-1)$-form $\omega$ in an open set $U\subset\mathbb{R}^n$. Our method operates at any point $x \in U \subset \mathbb{R}^n$ using a black-box numerical sampler that does not require access to the analytical expressions of the components of $\omega$. In addition, a demo implementation in SageMath~\cite{sagemath} for exterior differentiation in $\mathbb{R}^3$ is available at \href{https://github.com/TomasSilva/NumericalExteriorDerivative}{github.com/TomasSilva/NumericalExteriorDerivative}.

Contrary to existing approaches, such as Discrete Exterior Calculus (DEC)~\cite{Hirani}, which computes the exterior derivative via discretization over a mesh, or Finite Element Exterior Calculus (FEEC)~\cite{FEEC}, that uses the structure of the discrete de Rham complex to approximate differentiation, our method treats $\omega$ as a black-box function and does not require any mesh decomposition. Indeed, we assume access only to numerical evaluations of $\omega$ as a skew-symmetric $(0, k-1)$-tensor represented by an $n \times \cdots \times n$ array with $k-1$ skew-symmetric slots at any given point in $\R^n$. 

Using the notation established in \eqref{eq:general_form}, for any increasing multi-index $I=\{i_1<\ldots<i_{k-1}\}\subseteq\{1,\ldots,n\}$ of lenght $k-1$, we recall that the $I$-th component function $\omega_I$ of $\omega \in \Omega^{k-1}(\mathbb{R}^n)$ is
\[
x\mapsto\omega_I(x) = \omega_x(\partial_{{i_1}}, \dots, \partial_{i_{k-1}}) \in \mathbb{R}.
\] 
For a given point $x \in \mathbb{R}^n$ and degree $k-1$, the black-box sampler described in Algorithm~\ref{alg:sampler} outputs an array containing the numerical values of $\omega_I(x)$ for all increasing multi-indices $I$, without requiring a symbolic expression for  $\omega$ at all.
\begin{algorithm}[]
\caption{\texttt{sampler}}\label{alg:sampler}
\begin{algorithmic}[1]
\Require A point \( x \in \mathbb{R}^n \) and a degree \( (k-1) \in \{0, 1, \dots, n\} \)
\Ensure A skew-symmetric \(k\)-tensor representing the numerical components of some \( \omega \in \Omega^{k-1}(\mathbb{R}^n) \)

\State Initialise an empty dictionary \( T \)
\ForAll{multi-indices \( (i_1, \dots, i_{k-1}) \) with \( 1 \le i_1 < \dots < i_{k-1} \le n \)}
    \State Sample the value \( a_{i_1 \dots i_{k-1}} = \omega_{i_1 \dots i_{k-1}}(x) \)\Comment{Sampling treated as black-box}
    \State Set \( T[i_1, \dots, i_{k-1}] \gets a_{i_1 \dots i_{k-1}} \)
\EndFor
\State \Return Anti-symmetrisation of \( T \)
\end{algorithmic}
\end{algorithm}
The components of the exterior derivative $\eD\omega_x \in \Lambda^{k}(\R^n)^{\ast}$ can then be numerically computed using Definition~\ref{dfn: flux}.
Let $Q=\{q_1<\ldots<q_k\}\subseteq\{1,\ldots,n\}$ be an increasing multi-index of length $k$. Taking the associated canonical vectors $(e_{q_1},\ldots,e_{q_k})$ in $\R^n$, and letting $\varepsilon>0$ be sufficiently small, we can construct a singular $k$-block $c:B\to\mathbb{R}^n$ whose image $c(B)$ is the small $k$-cube centered at $x$ with side length $2\varepsilon$ in the $k$-plane generated by $e_{q_1},\ldots,e_{q_k}$,  
and then use the approximation
\begin{equation}\label{eq: eD_approx}
\eD\omega_x(e_{q_1},\ldots,e_{q_k}) \approx \frac{1}{(2\varepsilon)^k}\int_{\partial B}c^{\ast}\omega.
\end{equation}
Explicitly, we take $B:=[-\varepsilon,\varepsilon]^k\subset\mathbb{R}^k$ and let $c:B\to\mathbb{R}^n$ be given by
\[
c(t_1,\ldots,t_k):=x+\sum_{i=1}^k t_ie_{q_i}.
\] By Definition~\ref{dfn:boundary}, for each $i\in\{1,\ldots,k\}$ and $j\in\{0,1\}$, defining 
\[
c_{ij}:[-\varepsilon,\varepsilon]_1\times\ldots\times\widehat{[-\varepsilon,\varepsilon]_i}\times\ldots\times[-\varepsilon,\varepsilon]_k\longrightarrow\mathbb{R}^n
\] by 
\[
c_{ij}(t_1,\ldots,\hat{t_i},\ldots,t_k)\coloneqq c(t_1,\ldots,(-1)^{j+1}\varepsilon,\ldots,t_k),
\] then the boundary of $c$ is the $(k-1)$-chain $\partial c$ is given by
$$ 
\partial c = \sum\limits_{i=1}^k\sum\limits_{j=0}^1(-1)^{i+j}c_{ij},
$$ where the images of the $c_{ij}$ are the $(k-1)$-dimensional faces of the $k$-cube $c(B)$, with $c_{i0}$ and $c_{i1}$ being opposite faces, and they have centers $x_{ij}:=x + (-1)^{j+1} \varepsilon e_{q_i}$. 


In order to approximate the integral of $c^{\ast}\omega$ over each face $B_{ij} \subset \partial B$, we evaluate $\omega$ at the face center 
$$
x_{ij}=c(0,\ldots,0,\underbrace{(-1)^{j+1}\varepsilon}_{\text{$i$-th position}},0,\ldots,0) =: c(t_{ij}),
$$ in the $(k-1)$-plane $(e_{q_1},\ldots,e_{q_k})$ and multiply by the $(k-1)$-dimensional volume:
\begin{align*}
\int_{B_{ij}} c^{\ast}\omega &\approx \mathrm{vol}(B_{ij}) \, \omega_{c(t_{ij})}(c'(t_{ij})e_1,\ldots, \widehat{c'(t_{ij})e_i} ,\ldots, c'(t_{ij})e_{k})\\
&= (2\varepsilon)^{k-1} \, \omega_{x + (-1)^{j+1}\varepsilon e_{q_i}}(e_{q_1},\ldots, \widehat{e_{q_i}} ,\ldots,e_{q_k}).
\end{align*}

Therefore, using the above in \eqref{eq: eD_approx}, for $\varepsilon > 0$ sufficiently small, each component of $\eD \omega_x$ can be numerically approximated by
\begin{align}
\label{eq:numerical_approx}
\eD\omega_x(e_{q_1},\ldots,e_{q_k}) =\frac{1}{(2\varepsilon)}\sum\limits_{i=1}^k\sum\limits_{j=0}^1(-1)^{i+j}\omega_{x + (-1)^{j+1} \varepsilon e_{q_i}}(e_{q_1},\dots, \widehat{e_{q_i}} ,\dots,e_{q_k}).
\end{align}

\begin{algorithm}[]
\caption{Numerical Exterior Derivative in $\R^n$}\label{alg:R3}
\begin{algorithmic}[1]
\Require \texttt{sampler} for $(k-1)$--form $\omega$, point $x\in \R^n$, and step size $\varepsilon > 0$
\Ensure Numerical values of $D\omega$ at $x$

\State Initialise standard basis vectors $e_1, \dots, e_n \in \mathbb{R}^n$
\State Initialise $D\omega$ as the zero tensor with shape $n\times \overset{k \text{ times}}{\cdots}\times n$ 
\ForAll{multi-indices \( (q_1, \dots, q_{k}) \) with \( 1 \le q_1 < \dots < q_{k} \le n \)}
\State ${\small\eD\omega_x(e_{q_1},\ldots,e_{q_k})\leftarrow\frac{1}{(2\varepsilon)}\sum\limits_{i=1}^k\sum\limits_{j=0}^1(-1)^{i+j}{\scriptstyle\texttt{sampler}}(x+(-1)^{j+1} \varepsilon e_{q_i}){[q_1,\dots, \widehat{q_i} ,\dots,q_k]}}$
\EndFor
\State \Return Anti-symmetrisation of $\eD\omega$
\end{algorithmic}
\end{algorithm}

In Algorithm~\ref{alg:R3} we propose a computational routine for approximating \eqref{eq:numerical_approx}. Examples~\ref{ex:1R3} and~\ref{ex:2R3} illustrate its execution in $\R^3$.

\begin{example}
\label{ex:1R3}
    Take $x = (1,1,1)\in \R^3$, and consider a $1$-form $\omega \in \Omega^1(\R^3)$ sampled within an $\varepsilon=0.01$-neighbourhood of $x$ using a black-box sampler as the one described in Algorithm~\ref{alg:sampler}, which produced the data cloud in Table~\ref{tab:datacloud}. Actually, this is a sample for the $1$-form whose analytical expression is $\omega = xdx + ydy + zdz$, i.e. $\omega$ is the dual $1$-form of the radial vector field in $\mathbb{R}^3$; if one knew that a priori, one would know immediately that $d\omega = 0$. Below we will see that our method indeed gives $D\omega_x \approx 0$.

    \begin{table}[]
        \centering
        \begin{tabular}{c|c}
            \textbf{Point} $\mathbf{p}$ &  $\mathbf{\omega_p}$\\
            \hline
        $x+\varepsilon e_1= (1.01, 1,1) $ & $(1.01,1,1)$\\
        $x-\varepsilon e_1= (0.99,1,1) $  & $(0.99,1,1)$\\
        $x+\varepsilon e_2= (1,1.01,1) $ 
        & $(1,1.01,1)$\\
        $x-\varepsilon e_2= (1,0.99,1) $ 
        & $(1,0.99,1)$\\
        $x+\varepsilon e_3= (1,1,1.01) $ 
        & $(1,1,1.01)$\\
        $x-\varepsilon e_3= (1,1,0.99) $ 
        & $(1,1,0.99)$\\

        \end{tabular}
        \caption{Samples of $\omega$ on an $\varepsilon=0.01$-neighbourhood of $x$.}
        \label{tab:datacloud}
    \end{table}

    The components of the numerical exterior derivative of $\omega$ at $x$ can then be approximated by 
    $$ 
    \eD\omega_x(e_i, e_j) \approx \frac{1}{(0.02)}
    (\omega_{x-\varepsilon e_j}[i]+
     \omega_{x+\varepsilon e_i}[j]-
      \omega_{x+\varepsilon e_j}[i]-
       \omega_{x-\varepsilon e_i}[j]),
    $$ where $\omega_p[i]$ denotes the $i$-th position in the $\omega_p$ array. Thus, as expected,
    \begin{align*}
    \eD\omega_x &\approx \frac{1}{0.02}\left((1+1-1-1)e^1\wedge e^2 + (1+1-1-1) e^1\wedge e^3 + (1+1-1-1) e^2\wedge e^3\right)\\
    &= 0e^1\wedge e^2 + 0 e^1\wedge e^3 +0 e^2\wedge e^3=0.
    \end{align*}
\end{example}

\begin{example}
    \label{ex:2R3}
    Let $x' = (1,2,3)\in \R^3$, $\omega' \in \Omega^2(\R^3)$, $\varepsilon=0.01$. The corresponding data cloud for $\omega$ is shown in Table~\ref{tab:datacloud2}. This is a sample for the $2$-form whose analytical expression is $\omega' = xdy \wedge dz$. However, as before, we rely solely on the numerical samples rather than the explicit analytical expression.
  
    \begin{table}[]
        \centering
        \begin{tabular}{c|c|c|c|c|c|c}
            \textbf{Point} $\mathbf{p}$ &  \makecell{$x+\varepsilon e_1=$\\$(1.01, 2,3)$} &
            \makecell{$x-\varepsilon e_1=$\\$(0.99,2,3)$} &
            \makecell{$x+\varepsilon e_2=$\\$(1,2.01,3)$} &
            \makecell{$x-\varepsilon e_2=$\\$(1,1.99,3)$} &
            \makecell{$x+\varepsilon e_3=$\\$(1,2,3.01)$} &
            \makecell{$x-\varepsilon e_3=$\\$(1,2,2.99)$} \\
            
            \hline
            \rule{0pt}{8mm}
            $\mathbf{\omega'_p}$ &
            \makecell{\scalebox{0.8}{$\begin{bmatrix}
            0 & 0 & 0\\
            0 & 0 & 1.01 \\
            0 & -1.01 & 0 
            \end{bmatrix}$}} &
            \makecell{\scalebox{0.8}{$\begin{bmatrix}
            0 & 0 & 0\\
            0 & 0 & 0.99 \\
            0 & -0.99 & 0 
            \end{bmatrix}$}}&
            \makecell{\scalebox{0.8}{$\begin{bmatrix}
            0 & 0 & 0\\
            0 & 0 & 1 \\
            0 & -1 & 0 
            \end{bmatrix}$}}&
            \makecell{\scalebox{0.8}{$\begin{bmatrix}
            0 & 0 & 0\\
            0 & 0 & 1 \\
            0 & -1 & 0 
            \end{bmatrix}$}}&
            \makecell{\scalebox{0.8}{$\begin{bmatrix}
            0 & 0 & 0\\
            0 & 0 & 1 \\
            0 & -1 & 0  
            \end{bmatrix}$}}&
            \makecell{\scalebox{0.8}{$\begin{bmatrix}
            0 & 0 & 0\\
            0 & 0 & 1 \\
            0 & -1 & 0 
            \end{bmatrix}$}}

        \end{tabular}
        \caption{Samples of $\omega'$ on an $\varepsilon=0.01$-neighbourhood of $x$.}
        \label{tab:datacloud2}
    \end{table}

    The components of the numerical exterior derivative of $\omega$ at $x$ can then be approximated by 
    $$
    \eD\omega'_x(e_i, e_j, e_k) \approx \frac{1}{(0.02)}
    (\omega'_{x+\varepsilon e_i}[j,k]
     -\omega'_{x-\varepsilon e_i}[j,k]
     -\omega'_{x+\varepsilon e_j}[i,k]
     +\omega'_{x-\varepsilon e_j}[i,k]
     +\omega'_{x+\varepsilon e_k}[i,j]
     -\omega'_{x-\varepsilon e_k}[i,j]
     ),
    $$
    where $\omega'_p[i,j]$ denotes the $(i,j)$-th position in the $\omega'_p$ multi-array. As expected,
    \begin{align*}
    \eD\omega'_x &\approx \frac{1}{0.02}\left((1.01-0.99-0+0+0-0)e^1\wedge e^2 \wedge e^3\right)\\
    &= 1 e^1\wedge e^2\wedge e^3.
    \end{align*}
\end{example}

\bmhead{Acknowledgements} The main ideas developed in \S\S\ref{sec:trisection lemma}--\ref{sec:Stokes_Theorem} were originally
proposed by Felipe Acker
in \cite{acker1,paper,paper2}, where they are proved in somewhat flexible terms; above all, the most comprehensive source \cite{acker1} is only available in Portuguese. So we dedicate this article to Prof. Acker and hope to bring his innovation, now in a fully rigorous form, to a much broader audience.

HSE was supported by the São Paulo Research Foundation (Fapesp) [2021/04065-6] \textit{BRIDGES collaboration} and the Brazilian National Council for Scientific
and Technological Development (CNPq) [311128/2020-3]. TSRS was supported by Fapesp grant [2022/09891-4].













\bibliography{ref}%

\end{document}